\newcommand{\eqref}[1]{(\ref{#1})}
\newlength{\myfboxsep}
\newlength{\mywidth}
\newcommand{\boxv}{\setlength{\myfboxsep}{\fboxsep}
\setlength{\fboxsep}{0pt}
\,\framebox[\mywidth]{$\vee$}\,
\setlength{\fboxsep}{\myfboxsep}}
\newtheorem{theorem}{Theorem}[section]
\newtheorem{lemma}{Lemma}[section]
\newtheorem{proposition}{Proposition}[section]
\newcommand{\prob}{\mathrm{P}}
\newcommand{\lito}{\mathrm{o}}
\newcommand{\bigo}{\mathrm{O}}
\newcommand{\gmu}{G_{\mu}}
\begin{document}
\begin{frontmatter}

\title{Free subexponentiality}
\runtitle{Free subexponentiality}

\begin{aug}
\author[A]{\fnms{Rajat~Subhra} \snm{Hazra}\ead[label=e1]{rajatmaths@gmail.com}}
\and
\author[A]{\fnms{Krishanu} \snm{Maulik}\corref{}\ead[label=e2]{krishanu@isical.ac.in}}
\runauthor{R. S. Hazra and K. Maulik}
\affiliation{Indian Statistical Institute}
\address[A]{%Rajat Subhra Hazra and Krishanu Maulik\\
Statistics and Mathematics Unit\\
Indian Statistical Institute\\
203 B. T. Road\\
Kolkata 700108\\
India\\
\printead{e1}\\
\phantom{E-mail: }\printead*{e2}}
\end{aug}

% HISTORY:
\received{\smonth{9} \syear{2010}}
\revised{\smonth{8} \syear{2011}}

% ABSTRACT
%
\begin{abstract}
In this article, we introduce the notion of \textit{free
subexponentiality}, which extends the notion of subexponentiality in
the classical probability setup to the noncommutative probability
spaces under freeness. We show that distributions with regularly
varying tails belong to the class of free subexponential distributions.
This also shows that the partial sums of free random elements having
distributions with regularly varying tails are tail equivalent to their
maximum in the sense of Ben~Arous and Voiculescu [\textit{Ann.
Probab.} \textbf{34} (2006) 2037--2059]. %\citet{arous2006free}
The analysis is based on the asymptotic relationship between the tail
of the distribution and the real and the imaginary parts of the
remainder terms in Laurent series expansion of Cauchy transform, as
well as the relationship between the remainder terms in Laurent series
expansions of Cauchy and Voiculescu transforms, when the distribution
has regularly varying tails.
\end{abstract}
%
% KEYWORDS
%
\begin{keyword}[class=AMS]
\kwd[Primary ]{46L54}
\kwd[; secondary ]{60G70}.
\end{keyword}
\begin{keyword}
\kwd{Free probability}
\kwd{Cauchy transform}
\kwd{Voiculescu transform}
\kwd{regular variation}
\kwd{convolution}
\kwd{subexponential}.
\end{keyword}

\end{frontmatter}

%s1 ###
\section{Introduction} \label{secintro}
A noncommutative probability space is a pair $(\mathcal{A},\tau)$
where~$\mathcal{A}$ is a unital complex algebra, and $\tau$ is a linear
functional on $\mathcal{A}$ satisfying $\tau(1)=1.$ A noncommutative
analog of independence, based on free products, was introduced by
Voiculescu \cite
{voiculescu1986addition}. A family of unital subalgebras $\{\mathcal
{A}_i\}_{i\in I}\subset\mathcal{A}$ is called \textit{free} if $\tau
(a_1\cdots a_n)=0$ whenever $\tau(a_j)=0, a_j\in\mathcal{A}_{i_j}$
and $i_j\neq i_{j+1}$ for all $j$. The above setup is suitable for
dealing with bounded random variables. In order to deal with unbounded
random variables, we need to consider a tracial $W^*$-probability space
$(\mathcal{A},\tau)$ with a von Neumann algebra $\mathcal{A}$ and a
normal faithful tracial state $\tau$.

A self-adjoint operator $X$ is said to be affiliated to a von Neumann
algebra~$\mathcal{A}$, if $f(X)\in\mathcal{A}$ for any bounded Borel
function $f$ on the real line $\mathbb{R}.$ A~self-adjoint operator
affiliated with $\mathcal{A}$ will also be called a random element. For
an affiliated random element (i.e., a self-adjoint operator) $X$, the
algebra generated by $X$ is defined as $\mathcal A_X = \{f(X): f\mbox{ bounded measurable}\}$. The notion of freeness was extended to
this context by Bercovici and Voiculescu~\cite{bercovici1993free}. A
set of random elements $\{
X_i\}_{1\leq i\leq k}$ affiliated with a von Neumann algebra $\mathcal
{A},$ are called freely independent, or simply free, if $\{\mathcal
A_{X_i}\}_{1\leq i\leq k}$ are free.

Given a random element $X$ affiliated with $\mathcal{A},$ the law of
$X$ is the unique probability measure $\mu_X$ on $\mathbb{R}$
satisfying $\tau(f(X))=\int_{-\infty}^{\infty}f(t)\,d\mu_X(t)$ for every
bounded Borel function $f$ on $\mathbb{R}.$ If $e_A$ denote the
projection valued spectral measure associated with $X$, evaluated at
the set $A$, then it is easy to see that $\mu_{X}(-\infty,x]=\tau
(e_{(-\infty,x]}(X)).$ The distribution function of $X$, denoted by
${F}_X$, is given by ${F}_{X}(x)=\mu_X(-\infty,x].$

Let $\mathcal{M}$ be the family of probability measures on $\mathbb
{R}$. On $\mathcal{M}$, two associative operations $*$ and $\boxplus$
can be defined. The measure $\mu*\nu$ is the classical convolution of
$\mu$ and $\nu$, which also corresponds to the probability law of a
random variable $X+Y$, where $X$ and $Y$ are independent and have laws
$\mu$ and $\nu$, respectively. Also, given two measures $\mu$ and
$\nu
$, there exists a unique measure $\mu\boxplus\nu$, called the
\textit
{free convolution} of $\mu$ and $\nu$, such that whenever $X$ and $Y$
are two free random elements on a tracial $W^*$ probability space
$(\mathcal A, \tau)$ with laws $\mu$ and $\nu$, respectively, $X+Y$ has
the law $\mu\boxplus\nu$. The free convolution was first introduced by
Voiculescu~\cite{voiculescu1986addition} for compactly supported measures,
extended by Maassen~\cite{maassen1992addition} to measures with finite
variance and by Bercovici and Voiculescu~\cite{bercovici1993free} to
arbitrary Borel
probability measures with unbounded support. The classical and free
convolutions of distributions are defined and denoted analogously.

The relationship between $*$ and $\boxplus$ convolution is very
striking. They have many similarities like characterizations of
infinitely divisible and stable laws~\cite{bercovici1999stable,bercovici2000free},
weak law of large numbers~\cite{bercovici1996law}
and central limit theorem~\cite{Voiculescu1985,maassen1992addition,pata1996central}.
Analogs of many other classical theories have also
been derived. In recent times, links with extreme value theory \cite
{arous2006free,arous2009free} and de Finetti-type theorems \cite
{banica907finetti} have drawn much attention in the literature.
However, there are differences too---for example, Cram\'er's
theorem~\cite{bercovici1995superconvergence} and Raikov's theorem~\cite
{benaych2004failure} fail in the noncommutative setup.

Now we consider an interesting family of distributions in the classical
setup called subexponential distributions. The main endeavor of this
article is to obtain an analog of this concept in the noncommutative
setup under freeness. A probability measure $\mu$ on $[0,\infty)$, with
$\mu(x,\infty)>0$ for all $x\geq0$, is said to be \textit
{subexponential}, if for every $n\in\mathbb{N}$,
\[
\mu^{*n}(x,\infty)\sim n\mu(x,\infty)\qquad \mbox{as $x\rightarrow
\infty$.}
\]
For a random variable $X$ with distribution $F$ and subexponential law
$\mu$, $X$ and $F$ are also called subexponential. The above definition
can be rephrased in terms of the complementary distribution functions.
For a distribution function $F$, we define its complementary
distribution function as $\overline F = 1 - F$. Then a subexponential
distribution function satisfies, for each natural number $n$,
$\overline
{F^{*n}}(x) \sim n \overline F(x)$ as $x\to\infty$. The definition can
be extended to probability measures $\mu$ and equivalently distribution
functions $F$ defined on the entire real line. A distribution function\vadjust{\goodbreak}
$F$ on the real line is called subexponential if the distribution
function $F_+$, defined as $F_+(x) = F(x)$, for $x\ge0$ and $F_+(x)=0$,
for $x<0$, is subexponential. Thus to discuss the subexponential
property of the probability measures, it is enough to consider the ones
concentrated on $[0,\infty)$. The subexponential random variables
satisfy \textit{the principle of one large jump} as well. If $\{X_i\}$
are i.i.d. subexponential random variables, then for all $n \in
\mathbb N$,
\[
\prob[X_1+\cdots+X_n>x]\sim n\prob[X_1>x]=\prob\Bigl[ {\max_{1\leq
i\leq n}}X_i>x\Bigr] \qquad\mbox{as $x\rightarrow\infty$.}
\]
Such a property makes subexponential distributions an ideal choice for
modeling ruin and insurance problems and has caused wide interest in
the classical probability literature; cf.
\cite{embrechtskluppelbergmikoschbook,rolskiteugelsbook}.

The classical definition of subexponential distributions can be easily
extended to the noncommutative setup by replacing the classical
convolution powers by free convolution powers. We shall define a free
subexponential measure on $[0, \infty)$ alone, but the definition can
be extended to probability measures on the entire real line, as in the
classical case. Formally, we define a free subexponential measure as follows:
\begin{definition}
A probability measure $\mu$ on $[0,\infty)$, with $\mu(x,\infty)>0$ for
all $x\geq0$, is said to be free subexponential if for all $n$,
\[
\mu^{\boxplus n}(x,\infty)=\underbrace{(\mu\boxplus\cdots\boxplus
\mu
)}_{n\ \mathrm{times}}(x,\infty)\sim n\mu(x,\infty)\qquad \mbox{as
$x\rightarrow\infty$.}
\]
\end{definition}

The above definition can be rewritten in terms of distribution
functions as well. A distribution function $F$ is called free
subexponetial if for all $n \in\mathbb N$, $\overline{F^{\boxplus
n}}(x) \sim n \overline F(x)$ as $x\to\infty$. A random variable $X$
affiliated to a tracial $W^*$-probability space is called free
subexponential if its distribution is so. One immediate consequence of
the definition of free subexponentiality is the principle of one large jump.

Ben Arous and Voiculescu~\cite{arous2006free} showed that for two
distribution functions $F$
and $G$, there exists a unique measure $F\boxv G$, such that whenever
$X$ and $Y$ are two free random elements on a tracial $W^*$-probability
space, $F\boxv G$ will become the distribution of $X\vee Y$. Here
$X\vee Y$ is the maximum of two self-adjoint operators defined using
the spectral calculus via the projection-valued operators; see \cite
{arous2006free} for details. Ben Arous and Voiculescu~\cite
{arous2006free} showed that $F\boxv
G(x) = \max((F(x)+G(x)-1),0)$, and hence $F^{\boxv n}(x) = \max
((nF(x)-(n-1)),0)$. Then we have for each $n$, $\overline{F^{\boxv n}}
(x) \sim n \overline F(x)$ as $x\to\infty$. Thus, by the definition of
free subexponentiality, we have
\begin{proposition}[(Free one large jump principle)] \label{propfreeonelargejump}
Free subexponential distributions satisfy the principle of one large
jump, namely, if $F$ is freely subexponential, then, for every $n$,
\[
\overline{F^{\boxplus n}}(x) \sim\overline{F^{\boxv n}}(x)\qquad \mbox{as
$x\to\infty$.}\vadjust{\goodbreak}
\]
\end{proposition}

While the class of free subexponential distributions possess the above
important property, it remains to be checked whether the class is
nonempty. The answer to this question, which is the main result of
this article, is given in Theorem~\ref{maintheorem-1}. The
distributions with regularly varying (right) tails of index $-\alpha$,
with $\alpha\ge0$, form an important class of examples of
subexponential distributions in the classical setup. (In further
discussions, we shall suppress the qualifier ``right.'') A (real
valued) measurable function $f$ defined on nonnegative real line is
called \textit{regularly varying} (at infinity) with index $\alpha$ if
for every $t>0$, $f(tx)/f(x)\rightarrow t^{\alpha}$ as $x\rightarrow
\infty$. If $\alpha=0$, then $f$ is said to be slowly varying (at
infinity). Regular variation with index $\alpha$ at zero is defined
analogously. In fact, $f$ is regularly varying at zero of index $\alpha
$, if the function $x\mapsto f(1/x)$ is regularly varying at infinity
of index $-\alpha$. Unless otherwise mentioned, the regular variation
of a function will be considered at infinity. For regular variation at
zero, we shall explicitly mention so. A distribution function $F$ on
$[0,\infty)$ has regularly varying tail of index $-\alpha$ if
$\overline F(x)$ is regularly varying of index $-\alpha$. Since
$\overline F(x) \to0$ as $x\to\infty$, we must necessarily have
$\alpha
\ge0$. As in the case of subexponential distributions, a distribution
$F$ on the entire real line is said to have regularly varying tail if
$F_+$ has so. Note that for $x>0$, we have $\overline{F_+}(x) =
\overline F(x)$. A probability measure with regularly varying tail is
defined through its distribution function. Equivalently, a measure $\mu
$ is said to have a regularly varying tail if $\mu(x,\infty)$ is
regularly varying.

Other than distributions with regularly varying tails, Weibull
distributions with shape parameter less than $1$ and lognormal
distribution are some other well-known examples of subexponential
distributions in the classical setup. The last two distributions have
all moments finite unlike the distributions with regularly varying
tails of index $-\alpha$, which have all moments higher than $\alpha$
infinite.

The distributions with regularly varying tails have already attracted
attention in the noncommutative probability theory. They play a very
crucial role in determining the domains of attraction of stable laws
\cite{bercovici1999stable,bercovici2000free}. In this article, we
shall show that the distributions with regularly varying tails form a
subclass of the free subexponential distributions.
\begin{theorem}\label{maintheorem-1}
If a distribution function $F$ has regularly varying tail of index
${-\alpha}$ with $\alpha\ge0$, then $F$ is free subexponential.
\end{theorem}

The class of distribution functions with regularly varying tails is a
significantly large class containing stable distributions, Pareto and
Fr\'echet distributions. For all $\alpha\ge0$, there are distribution
functions, which have regularly varying tail of index~$-\alpha$. The
class of distribution function with regularly varying tail of index
$-\alpha$ have found significant application in finance, insurance,
weather, Internet traffic modeling and many other fields.

While it need not be assumed that the measure is concentrated on
$[0,\infty)$, both the notions of free subexponentiality and regular
variation are defined in terms of the measure restricted to $[0,\infty
)$. Thus we shall assume the measure to be supported on $[0,\infty)$,
except for the definitions of the relevant transforms in the initial
part of Section~\ref{subsectransf} and in the statement and the
proof of Theorem~\ref{maintheorem-1}. Due to the lack of coordinate
systems and expressions for joint distributions of noncommutative
random elements in terms of probability measures, the proofs of the
above results deviate from the classical ones. In absence of the higher
moments of the distributions with regularly varying tails, we cannot
use the usual moment-based approach used in free probability theory.
Instead, Cauchy and Voiculescu transforms become the natural tools to
deal with the free convolution of measures. We recall the notions of
these transforms in Section~\ref{sectransform}. We then discuss the
relationship between the remainder terms of Laurent expansions of
Cauchy and Voiculescu transforms of measures with regularly varying
tail of index $-\alpha$. We need to consider four cases separately
depending on the maximum number $p$ of integer moments that the measure
$\mu$ may have. For a nonnegative integer $p$, let us denote the class
of all probability measures $\mu$ on $[0,\infty)$ with $\int
_0^\infty
t^p \,d\mu(t) <\infty$, but $\int_0^\infty t^{p+1} \,d\mu(t) =\infty$, by
$\mathcal M_p$. We shall also denote the class of all probability
measures $\mu$ in $\mathcal M_p$ with regularly varying tail of index
$-\alpha$ by $\mathcal M_{p,\alpha}$. Note that we necessarily have
$\alpha\in[p,p+1]$; cf. \cite[Proposition A3.8(d)]{embrechtskluppelbergmikoschbook}.
Theorems \ref
{thmerrorequiv}--\ref{thmerrorequivnew} summarize the
relationships among the remainder terms for various choices of $\alpha$
and $p$. These theorems are the key tools of this article. Section \ref
{sectransform} is concluded with two Abel--Tauber-type results for
Stieltjes transform of measures with regularly varying tail. We then
prove Theorem~\ref{maintheorem-1} in Section~\ref{secpfthmone}
using Theorems~\ref{thmerrorequiv}--\ref{thmerrorequivnew}. We
use the final two sections to prove Theorems~\ref{thmerrorequiv}--\ref{thmerrorequivnew}. In Section~\ref{secCauchy}, we
collect some results about the remainder term in Laurent series
expansion of Cauchy transform of measures with regularly varying tails.
In Section~\ref{secC-Vreln}, we study the relationship between the
remainder terms in Laurent expansions of Cauchy and Voiculescu
transforms through a general analysis of the remainder terms of Taylor
expansions of a suitable class of functions and their inverses or
reciprocals. Combining the results of Sections~\ref{secCauchy}
and~\ref{secC-Vreln}, we prove Theorems~\ref{thmerrorequiv}--\ref
{thmerrorequivnew}.

%s2 ###
\section{Some transforms and their related properties} \label{sectransform}
In this section, we collect some notation, definitions and results to
be used later in the article. In Section~\ref{subsecnontang}, we
define the concept of nontangential limits. Various transforms in
noncommutative probability theory, like Cauchy, Voiculescu and $R$
transforms are introduced in Section~\ref{subsectransf}.
Theorems~\ref{thmerrorequiv}--\ref{thmerrorequivnew} regarding
the relationship between the remainder terms of Laurent expansions of
Cauchy and Voiculescu transforms are given in this subsection as well.
Finally, in Section~\ref{subseckaramata}, two results about
measures with regularly varying tails are given.
%s2.1 ###
\subsection{Nontangential limits and notation} \label{subsecnontang}
The complex plane will be denoted by $\mathbb C$ and for a complex
number $z$, $\Re z$ and $\Im z$ will denote its real and imaginary
parts respectively. We say $z$ goes to infinity (zero, resp.) \textit
{nontangentially} to $\mathbb R$ (n.t.), if $z$ goes to infinity
(zero, resp.), while $\Re z/ \Im z$ stays bounded. We can then define
that a
function $f$ converges or stays bounded as $z$ goes to infinity (or
zero) n.t. To elaborate upon the notion, given positive numbers $\eta$,
$\delta$ and $M$, let us define the following cones:
\begin{enumerate}[(1)]
\item[(1)] $\Gamma_{\eta}=\{z\in\mathbb{C}^+: |\Re z|<\eta\Im z\}$
and $\Gamma_{\eta,M}=\{z\in\Gamma_{\eta}: |z|>M\}$;
\item[(2)] $\Delta_{\eta}=\{z\in\mathbb{C}^-: |\Re z|<-\eta\Im z\}$
and $\Delta_{\eta,\delta}=\{z\in\Delta_{\eta}: |z|<\delta\}$,
\end{enumerate}
where $\mathbb C^+$ and $\mathbb C^-$ are the upper and the lower
halves of the complex plane respectively, namely, $\mathbb C^+ = \{z\in
\mathbb C: \Im z>0\}$ and $\mathbb C^- = - \mathbb C^+$. Then we shall
say that $f(z) \to l$ as $z$ goes to $\infty$ n.t., if for any
$\varepsilon
>0$ and $\eta>0$, there exists $M\equiv M(\eta,\varepsilon)>0$, such that
$|f(z) - l|< \varepsilon$, whenever $z \in\Gamma_{\eta, M}$. The
boundedness can be defined analogously.

We shall write $f(z)\approx g(z)$, $f(z)=\lito(g(z))$ and $f(z)=\bigo
(g(z))$ as $z\to\infty$ n.t. to mean that $f(z)/g(z)$ converges to a
nonzero limit, ${f(z)}/{g(z)}\rightarrow0$ and $f(z)/g(z)$ stays
bounded as $z\to\infty$ n.t., respectively. If the nonzero limit is
$1$ in the first case, we write $f(z) \sim g(z)$ as $z\to\infty$ n.t.
For $f(z)=\lito(g(z))$ as $z\to\infty$ n.t., we shall also use the
notation $f(z)\ll g(z)$ and $g(z)\gg f(z)$ as $z\to\infty$ n.t.

The map $z\mapsto1/z$ maps the set $\Gamma_{\eta,{1}/{\delta}}$ onto
$\Delta_{\eta,\delta}$ for each positive $\eta$ and $\delta$. Thus the
analogous concepts can be defined for $z\to0$ n.t. using $\Delta
_{\eta
,\delta}$.

%s2.2 ###
\subsection{Cauchy and Voiculescu transforms} \label{subsectransf}
For a probability measure $\mu\in\mathcal{M}$, its Cauchy transform is
defined as
\[
G_{\mu}(z)=\int_{-\infty}^{\infty}\frac{1}{z-t}\,d\mu(t),\qquad z\in
\mathbb{C}^+.
\]
Note that $G_{\mu}$ maps $\mathbb{C}^+$ to $\mathbb{C}^{-}$. Set
$F_{\mu
}=1/\gmu$, which maps $\mathbb{C}^+$ to $\mathbb{C}^+$. We shall be
also interested in the function $H_\mu(z) = G_\mu(1/z)$ which maps
$\mathbb C^-$ to~$\mathbb C^-$.

By Proposition 5.4 and Corollary 5.5 of~\cite{bercovici1993free}, for
all $\eta>0$ and for all $\varepsilon\in(0,\eta\wedge1)$, there exists
$\delta\equiv\delta(\eta)$ small enough, such that $H_\mu$ is a
conformal bijection from $\Delta_{\eta,\delta}$ onto an open set
$\mathcal{D}_{\eta,\delta}$, where the range sets satisfy
\[
\Delta_{\eta-\varepsilon,(1-\varepsilon)\delta}\subset\mathcal
{D}_{\eta
,\delta}\subset\Delta_{\eta+\varepsilon,(1+\varepsilon)\delta}.
\]

If we define $\mathcal D = \bigcup_{\eta>0} \mathcal D_{\eta, \delta
(\eta
)}$, then we can obtain an analytic function $L_\mu$ with domain
$\mathcal D$ by patching up the inverses of $H_\mu$ on $\mathcal
{D}_{\eta,\delta(\eta)}$ for each $\eta>0$. In this case $L_{\mu}$
becomes the right inverse of $H_\mu$ on $\mathcal{D}.$ Also it was
shown that the sets of type $\Delta_{\eta,\delta}$ were contained in
the unique connected component of the set $H_\mu^{-1}(\mathcal{D})$. It
follows that $H_\mu$ is the right inverse of $L_{\mu}$ on $\Delta
_{\eta
,\delta}$ and hence on the whole connected component by analytic continuation.

We then define $R$ and Voiculescu transforms of the probability measure
$\mu$ respectively as
%
%
%e2.1 ###
\begin{equation}\label{eqRphidefn}
R_{\mu}(z)=\frac{1}{L_{\mu}(z)}-\frac{1}{z}\quad \mbox{and}\quad \phi
_{\mu}(z)=R_{\mu}({1}/{z}).
\end{equation}
Arguing as in the case of $\gmu(1/z)$, it can be shown that $F_\mu$ has
a left inverse, denoted by $F_\mu^{-1}$ on a suitable domain and, in
that case, we have
\[
\phi_{\mu}(z)=F_{\mu}^{-1}(z)-z.
\]

Bercovici and Voiculescu~\cite{bercovici1993free} established the
following relation between
free convolution and Voiculescu and $R$ transforms. For probability
measures $\mu$ and $\nu$,
\[
\phi_{\mu\boxplus\nu}=\phi_{\mu}+\phi_{\nu}\quad \mbox{and}\quad R_{\mu
\boxplus\nu}=R_{\mu}+R_{\nu},
\]
wherever all the functions involved are defined.

We shall also need to analyze the power and Taylor series expansions of
the above transforms. For Taylor series expansion of a function, we
need to define the remainder term appropriately, so that it becomes
amenable to the later calculations. In fact, for a function $A$ with
Taylor series expansion of order $p$, we define the remainder term as
%
%
%e2.2 ###
\begin{equation} \label{eqdefremainder}
r_A(z) = z^{-p} \Biggl( A(z) - \sum_{i=0}^p a_i z^i \Biggr).
\end{equation}
Note that we divide by $z^p$ after subtracting the polynomial part.

For compactly supported measure $\mu$, Speicher \cite
{speicher1994multiplicative} showed that, in an appropriate
neighborhood of zero, $R_{\mu}(z)=\sum_{j= 0}^\infty\kappa
_{j+1}(\mu
)z^j$, where $\{\kappa_j(\mu)\}$ denotes the free cumulant sequence of
the probability measure $\mu$. For probability measures $\mu$ with
finite $p$ moments, Taylor expansions of $R_\mu$ and $H_\mu$ are given
by Theorems~$1.3$ and $1.5$ of~\cite{benaych2006taylor},
%
%
%e2.3 ###
\begin{eqnarray} \label{eqerror}
R_{\mu}(z)&=&\sum_{j=0}^{p-1}\kappa_{j+1}(\mu)z^j+ z^{p-1} r_{R_\mu}(z)
\quad\mbox{and}
\nonumber
\\[-8pt]
\\[-8pt]
\nonumber
 H_\mu(z)&=&\sum_{j=1}^{p+1}m_{j-1}(\mu)z^j+z^{p+1}
r_{H_\mu}(z),
\end{eqnarray}
where the remainder terms $r_{R_\mu}(z) \equiv r_R(z) = \lito(1)$ and
$r_{H_\mu}(z) \equiv r_H(z) = \lito(1)$ as $z\rightarrow0$ n.t. are
defined along the lines of \eqref{eqdefremainder}, $\{\kappa_j(\mu
):j\le p\}$ denotes the free cumulant sequence of $\mu$ as before and
$\{m_j(\mu):j\le p\}$ denotes the moment sequence of the probability
measure $\mu$. When there is no possibility of confusion, we shall
sometimes suppress the measure involved in the notation for the moment
and the cumulant sequences, as well as the remainder terms.
In the study of stable laws and the infinitely divisible laws, the
following relationship between\vadjust{\goodbreak} Cauchy and Voiculescu transforms of a
probability measure $\mu$, obtained in Proposition 2.5 of Bercovici
and Pata \cite
{bercovici1999stable}, played a crucial role:
%
%
%e2.4 ###
\begin{equation} \label{eqphimu}
\phi_{\mu}(z)\sim z^2\biggl[\gmu(z)-\frac{1}{z}\biggr]\qquad \mbox{as }
z\rightarrow\infty\mbox{ n.t.}
\end{equation}
Depending on the number of moments that the probability measure $\mu$
may have, its Cauchy and Voiculescu transforms can have Laurent series
expansions of higher order. Motivated by this fact, for probability
measures $\mu\in\mathcal M_p$ (i.e., when $\mu$ has only $p$
integral moments), we introduce the remainder terms in Laurent series
expansion of Cauchy and Voiculescu transforms (in analogy to the
remainder terms in Taylor series expansion),
%
%e2.5 ###
\begin{equation}\label{eqrgdefn}
r_{G_\mu}(z) \equiv r_G(z) = z^{p+1} \Biggl(G_\mu(z) - \sum
_{j=1}^{p+1}m_{j-1}(\mu)z^{-j}\Biggr)
\end{equation}
and
%e2.6 ###
\begin{equation}\label{eqrphidefn}
r_{\phi_\mu}(z) \equiv r_\phi(z) = z^{p-1} \Biggl(\phi_\mu(z) - \sum
_{j=0}^{p-1}\kappa_{j+1}(\mu)z^{-j}\Biggr),
\end{equation}
where we shall again suppress the measure $\mu$ in the notation if
there is no possibility of confusion. In \eqref{eqrphidefn}, we
interpret the sum on the right-hand side as zero, when $p=0$. Using the
remainder terms defined in \eqref{eqrgdefn} and \eqref{eqrphidefn}
we provide extensions of \eqref{eqphimu} in Theorems~\ref{thmerrorequiv}--\ref{thmerrorequivnew} for different choices of $\alpha$
and $p$. We split the statements into four cases as follows: (i) $p$ is
a positive integer, and $\alpha\in(p,p+1)$; (ii)~$p$ is a positive
integer, and $\alpha=p$; (iii) $p=0$, and $\alpha\in[0,1)$; (iv) $p$ is
a nonnegative integer and, $\alpha=p+1$, giving rise to Theorems \ref
{thmerrorequiv}--\ref{thmerrorequivnew}, respectively.

We first consider the case where $p$ is a positive integer and $\alpha
\in(p,p+1)$.

\begin{theorem}\label{thmerrorequiv}
Let $\mu$ be a probability measure in the class $\mathcal M_p$ and
$\alpha\in(p,p+1)$. The following statements are equivalent:
\begin{enumerate}[(iii)]
\item[(i)]\hypertarget{tail}$\mu(y,\infty)$ is regularly varying of index $-\alpha$.
\item[(ii)]\hypertarget{Cauchyremainder}$\Im r_G(iy)$ is regularly varying of index $-(\alpha-p)$.
\item[(iii)]\hypertarget{Voiculescuremainder} $\Im r_\phi(iy)$ is regularly varying of index $-(\alpha-p)$,
$\Re r_\phi(iy)\gg y^{-1}$ as $y\to\infty$ and $r_\phi(z)\gg
z^{-1}$ as
$z\to\infty$ n.t.
\end{enumerate}
If any of the above statements holds, we also have, as $z\to\infty$ n.t.,
%
%
%e2.7 ###
\begin{equation} \label{rg-rphi}
r_G(z)\sim r_\phi(z) \gg z^{-1};
\end{equation}
as $y\to\infty$,
%
%
%e2.8 ###
\begin{equation}
\Im r_\phi(iy) \sim\Im r_G(iy) \sim-\frac{\pi(p+1-\alpha
)/2}{\cos({\pi(\alpha-p)}/2)} y^p \mu(y,\infty) \gg\frac1y
\label{imrg-rphi}
\end{equation}
and
%
%
%e2.9 ###
\begin{equation}
\Re r_\phi(iy) \sim\Re r_G(iy) \sim-\frac{{\pi(p+2-\alpha
)}/2}{\sin({\pi(\alpha-p)}/2)} y^p \mu(y,\infty) \gg\frac1y.
\label{rerg-rphi}
\end{equation}
\end{theorem}

Next we consider the case where $p$ is a positive integer and $\alpha=p$.
\begin{theorem}\label{thmerrorequiveqp}
Let $\mu$ be a probability measure in the class $\mathcal M_p$. The
following statements are equivalent:
\begin{enumerate}[(iii)]
\item[(i)]$\mu(y,\infty)$ is regularly varying of index $-p$. \label{taileqp}
\item[(ii)]$\Im r_G(iy)$ is slowly varying. \label{Cauchyremaindereqp}
\item[(iii)]$\Im r_\phi(iy)$ is slowly varying, $\Re r_\phi(iy)\gg y^{-1}$ as
$y\to\infty$ and $r_\phi(z)\gg z^{-1}$ as $z\to\infty$ n.t. \label
{Voiculescuremaindereqp}
\end{enumerate}
If any of the above statements holds, we also have, as $z\to\infty$ n.t.,
%
%
%e2.10 ###
\begin{equation} \label{rg-rphieqp}
r_G(z)\sim r_\phi(z) \gg z^{-1};
\end{equation}
as $y\to\infty$,
%
%
%e2.11 ###
\begin{equation}
\Im r_\phi(iy) \sim\Im r_G(iy) \sim-\frac\pi2 y^p \mu(y,\infty)
\gg
\frac1y \label{imrg-rphieqp}
\end{equation}
and
%
%
%e2.12 ###
\begin{equation}
\Re r_\phi(iy) \sim\Re r_G(iy) \gg\frac1y. \label{rerg-rphieqp}
\end{equation}
\end{theorem}

In the third case, we consider $\alpha\in[0,1)$.
\begin{theorem}\label{thmerrorequiv-0}
Let $\mu$ be a probability measure in the class $\mathcal M_0$ and
$\alpha\in[0,1)$. The following statements are equivalent:
\begin{enumerate}[(iii)]
\item[(i)]$\mu(y,\infty)$ is regularly varying of index $-\alpha$.
\label{tail-0}
\item[(ii)]$\Im r_G(iy)$ is regularly varying of index $-\alpha$. \label
{Cauchyremainder-0}
\item[(iii)]\hypertarget{Voiculescuremainder-0}$\Im r_\phi(iy)$ is regularly varying of index $-\alpha$, $\Re
r_\phi(iy)\approx\Im r_\phi(iy)$ as $y\to\infty$ and $r_\phi
(z)\gg
z^{-1}$ as $z\to\infty$ n.t.
\end{enumerate}
If any of the above statements holds, we also have, as $z\to\infty$ n.t.,
%
%
%e2.13 ###
\begin{equation} \label{rg-rphi-0}
r_G(z)\sim r_\phi(z) \gg z^{-1};
\end{equation}
as $y\to\infty$,
%
%
%e2.14 ###
\begin{equation}
\Im r_\phi(iy) \sim\Im r_G(iy) \sim-\frac{{\pi(1-\alpha
)}/2}{\cos({\pi\alpha}/2)} \mu(y,\infty) \gg\frac1y \label{imrg-rphi-0}
\end{equation}
and
%
%
%e2.15 ###
\begin{equation}
\Re r_\phi(iy) \sim\Re r_G(iy) \sim-d_\alpha\mu(y,\infty) \gg
\frac
1y, \label{rerg-rphi-0}
\end{equation}
where
\[
d_\alpha=
\cases{
\displaystyle\frac{{\pi(2-\alpha)}/2}{\sin({\pi\alpha}/2)}, &\quad $\mbox{when
$\alpha>0$,}$\vspace*{2pt}\cr
1, &\quad $\mbox{when $\alpha=0$.}$}
\]
\end{theorem}

Finally, we consider the case where $p$ is a nonnegative integer, and
$\alpha=p+1$.
\begin{theorem}\label{thmerrorequivnew}
Let $\mu$ be a probability measure in the class $\mathcal M_p$ and
$\beta\in(0,1/2)$. The following statements are equivalent:
\begin{enumerate}[(iii)]
\item[(i)]$\mu(y,\infty)$ is regularly varying of index $-(p+1)$. \label
{tailnew}
\item[(ii)]$\Re r_G(iy)$ is regularly varying of index $-1$. \label{Cauchyremaindernew}
\item[(iii)]$\Re r_\phi(iy)$ is regularly varying of index $-1$, $y^{-1} \ll
\Im r_\phi(iy)\ll y^{-(1-\beta/2)}$ as $y\to\infty$ and $z^{-1} \ll
r_\phi(z) \ll z^{-\beta}$ as $z\to\infty$ n.t. \label{Voiculescuremaindernew}
\end{enumerate}
If any of the above statements holds, we also have, as $z\to\infty$ n.t.,
%
%
%e2.16 ###
\begin{equation} \label{rg-rphinew}
z^{-1}\ll r_G(z)\sim r_\phi(z)\ll z^{-\beta};
\end{equation}
as $y\to\infty$,
%
%
%e2.17 ###
\begin{equation} \label{rerg-rphinew}
y^{-(1+\beta/2)} \ll\Re r_\phi(iy) \sim\Re r_G(iy) \sim-\frac\pi2
y^p \mu(y,\infty) \ll y^{-(1-\beta/2)}
\end{equation}
and
%
%
%e2.18 ###
\begin{equation} \label{imrg-rphinew}
y^{-1} \ll\Im r_\phi(iy) \sim\Im r_G(iy) \ll y^{-(1-\beta/2)}.
\end{equation}
\end{theorem}

It is easy to obtain the equivalent statements for $H_\mu$ and $R_\mu$
through the simple observation that $G_\mu(z)=H_\mu(1/z)$ and $\phi
_\mu
(z)=R_\mu(1/z)$. For $p=0$, Theorems~\ref{thmerrorequiv-0}
and \ref
{thmerrorequivnew} together give a special case of \eqref{eqphimu} for the probability measures with regularly varying tail and
infinite mean. However, Theorems~\ref{thmerrorequiv}--\ref{thmerrorequivnew} give more detailed asymptotic behavior of the real and
imaginary parts separately, which is required for our analysis.

%s2.3 ###
\subsection{Karamata-type results} \label{subseckaramata}
We provide here two results for regularly varying functions, which we
shall be using in the proofs of our results. They are variants of
Karamata's Abel--Tauber theorem for Stieltjes transform (cf.~\cite{binghamgoldieteugels1987}, Section 1.7.5) and explain the
regular variation of Cauchy transform of measures with regularly
varying tails.

The first result is quoted from~\cite{bercovici1999stable}.
\begin{proposition}[(\cite{bercovici1999stable}, Corollary 5.4)]\label{stieltjes}
Let $\rho$ be a positive Borel measure on $[0,\infty)$ and fix
$\alpha
\in[0,2)$. Then the following statements are equivalent:
\begin{enumerate}[(ii)]
\item[(i)]$y\mapsto\rho[0,y]$ is regularly varying of index $\alpha$.
\item[(ii)]$y\mapsto\int_0^{\infty}\frac{1}{t^2+y^2}\,d\rho(t)$ is regularly
varying of index $-(2-\alpha)$.\vadjust{\goodbreak}
\end{enumerate}
If either of the above conditions is satisfied, then
\[
\int_0^{\infty}\frac{1}{t^2+y^2}\,d\rho(t)\sim\frac{{\pi
\alpha
}/{2}}{\sin({\pi\alpha}/{2})}\frac{\rho[0,y]}{y^2}\qquad \mbox{as }
y\rightarrow\infty.
\]
The constant pre-factor on the right-hand side is interpreted as $1$
when $\alpha=0$.
\end{proposition}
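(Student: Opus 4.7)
The statement is a variant of Karamata's Abel--Tauber theorem for Stieltjes transforms \citep[cf.][Section~1.7.5]{bingham:goldie:teugels:1987}, and I would prove it by reducing to that classical result via a change of variables. Push $\rho$ forward under $t \mapsto t^2$ to obtain a measure $\tilde\rho$ with $\tilde\rho[0,s]=\rho[0,\sqrt s]$, so that
\[
\int_0^\infty \frac{d\rho(t)}{t^2+y^2}=\int_0^\infty \frac{d\tilde\rho(s)}{s+y^2}.
\]
Under this substitution, $\rho[0,y]$ is regularly varying of index $\alpha\in[0,2)$ if and only if $\tilde\rho[0,s]$ is regularly varying of index $\sigma:=\alpha/2\in[0,1)$; with $x:=y^2$, the proposition becomes the statement that regular variation of $\tilde\rho[0,x]$ of index $\sigma$ is equivalent to regular variation of index $\sigma-1$ of its standard Stieltjes transform $x\mapsto\int_0^\infty (s+x)^{-1}d\tilde\rho(s)$, which is precisely the classical Karamata theorem.

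To identify the exact constant in the Abelian direction, I would test against the power case $\tilde\rho[0,s]=s^\sigma$, computing via the Beta integral
\[
\int_0^\infty \frac{\sigma s^{\sigma-1}}{s+x}\,ds = \sigma x^{\sigma-1}\int_0^\infty \frac{u^{\sigma-1}}{1+u}\,du = \frac{\pi\sigma}{\sin\pi\sigma}\,x^{\sigma-1};
\]
setting $\sigma=\alpha/2$ and $x=y^2$ recovers the pre-factor $\frac{\pi\alpha/2}{\sin(\pi\alpha/2)}$ appearing in the proposition. Extension from the pure-power case to general regularly varying $\tilde\rho[0,\cdot]$ would proceed by the uniform convergence theorem for regular variation combined with dominated convergence, after splitting the integral over $[0,\epsilon y^2]$, $[\epsilon y^2,y^2/\epsilon]$ and $[y^2/\epsilon,\infty)$ and showing that the middle range dominates while Potter bounds control the tails.

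The direction (ii)$\Rightarrow$(i) is the Tauberian half and is the main obstacle, since Abelian limits are generally not invertible without a side condition. Here it comes essentially for free from the classical theorem because $\tilde\rho[0,\cdot]$ is non-decreasing, which is precisely the monotonicity side condition under which the Stieltjes-transform asymptotic can be inverted. Finally, for the boundary value $\alpha=0$ the pre-factor is interpreted as $\lim_{\alpha\downarrow 0}\frac{\pi\alpha/2}{\sin(\pi\alpha/2)}=1$, and the conclusion $\int_0^\infty(t^2+y^2)^{-1}\,d\rho(t)\sim \rho[0,y]/y^2$ then follows from the same three-region splitting: in the slowly varying case the mass outside $[\epsilon y,y/\epsilon]$ is negligible relative to $\rho[0,y]$, while the middle region contributes $(1+o(1))\rho[0,y]/y^2$.
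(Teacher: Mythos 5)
The paper does not prove Proposition~\ref{stieltjes}; it is quoted verbatim from Bercovici and Pata (1999, Corollary~5.4). So there is no in-paper proof to compare against. That said, your strategy --- push $\rho$ forward under $t\mapsto t^2$ to reduce the Cauchy-type kernel $\frac{1}{t^2+y^2}$ to the standard Stieltjes kernel $\frac{1}{s+x}$ with $x=y^2$, invoke the classical Karamata Abel--Tauber theorem for Stieltjes transforms (which holds for monotone $\tilde\rho[0,\cdot]$, so the Tauberian side condition is automatic), and extract the constant $\frac{\pi\sigma}{\sin\pi\sigma}$ from the Beta integral --- is the standard route, and it is closely parallel to how the authors do prove the companion Proposition~\ref{stieltjes2}, where they too substitute $s=t^2$ and then invoke Theorem~1.7.4 of Bingham--Goldie--Teugels. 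The index bookkeeping is right: $\tilde\rho[0,s]=\rho[0,\sqrt s]$ is regularly varying of index $\alpha/2\in[0,1)$, the transform in $x$ has index $\alpha/2-1$, and substituting $x=y^2$ gives index $\alpha-2=-(2-\alpha)$ in $y$, matching the statement.

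One concrete slip: your final paragraph on $\alpha=0$ asserts that the middle region $[\epsilon y,y/\epsilon]$ carries the main contribution while the mass outside is negligible. For slowly varying $\rho[0,\cdot]$ this is backwards. Slow variation forces $\rho[0,\epsilon y]/\rho[0,y]\to 1$, so $\rho(\epsilon y,y/\epsilon]=\rho[0,y/\epsilon]-\rho[0,\epsilon y]=o(\rho[0,y])$ --- the middle strip is negligible, and it is the block $\int_0^{\epsilon y}\frac{d\rho(t)}{t^2+y^2}\approx\rho[0,\epsilon y]/y^2\sim\rho[0,y]/y^2$ that produces the asymptotic. The fix is easy, and in fact unnecessary: the classical Stieltjes--Karamata theorem you invoke already covers $\sigma=0$ (slowly varying, constant $=1$), so no separate treatment of $\alpha=0$ is needed once the reduction is made. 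Apart from that superfluous and slightly garbled paragraph, the plan is sound.
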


The second result uses a different integrand.
\begin{proposition}\label{stieltjes2}
Let $\rho$ be a finite positive Borel measure on $[0,\infty)$ and fix
$\alpha\in[0,2)$. Then the following statements are equivalent:
\begin{enumerate}[(ii)]
\item[(i)]$y\mapsto\rho(y,\infty)$ is regularly varying of index
$-\alpha$.
\item[(ii)]$y\mapsto\int_0^{\infty}\frac{t^2}{t^2+y^2}\,d\rho(t)$ is regularly
varying of index $-\alpha$.
\end{enumerate}
If either of the above conditions is satisfied, then
\[
\int_0^{\infty}\frac{t^2}{t^2+y^2}\,d\rho(t)\sim\frac{{\pi
\alpha
}/{2}}{\sin({\pi\alpha}/{2})}{\rho(y,\infty)}\qquad \mbox{as }
y\rightarrow
\infty.
\]
The constant pre-factor on the right-hand side is interpreted as $1$
when $\alpha=0$.
\end{proposition}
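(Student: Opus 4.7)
The plan is to reduce the assertion to Proposition~\ref{stieltjes} by a change of measure and an integration by parts. The central identity is obtained by writing $d\rho(t) = -d(\rho(t,\infty))$, noting $\frac{d}{dt}\bigl(\frac{t^2}{t^2+y^2}\bigr) = \frac{2ty^2}{(t^2+y^2)^2}$ and checking that the boundary terms vanish (at $t=0$ the integrand is $0$, at $t=\infty$ finiteness of $\rho$ forces $\rho(t,\infty)\to0$), to get
\begin{equation*}
I(y) := \int_0^\infty \frac{t^2}{t^2+y^2}\, d\rho(t) = \int_0^\infty \frac{2ty^2}{(t^2+y^2)^2}\, \rho(t,\infty)\, dt.
\end{equation*}
After the substitution $t = ys$ this becomes the Mellin-type form
\begin{equation*}
I(y) = \int_0^\infty \frac{2s}{(1+s^2)^2}\, \rho(ys,\infty)\, ds,
\end{equation*}
which is the workhorse of both implications.

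For (i)$\Rightarrow$(ii) together with the asymptotic, I would divide by $\rho(y,\infty)$ and pass to the limit inside the integral. Pointwise regular variation gives $\rho(ys,\infty)/\rho(y,\infty) \to s^{-\alpha}$ for each $s>0$ (read as $1$ when $\alpha=0$). Dominated convergence is justified by Potter's bound: for any small $\delta>0$ and $y$ large, $\rho(ys,\infty)/\rho(y,\infty) \le (1+\delta)\max(s^{-\alpha-\delta}, s^{-\alpha+\delta})$ whenever $ys$ exceeds a fixed threshold, which yields an integrable majorant once one combines the factors $s/(1+s^2)^2$ at $0$ and $\infty$ (finiteness at $\infty$ needs $\alpha-\delta>-2$, which holds as $\alpha<2$). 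For the residual range $s\in (0,y_0/y)$ the trivial bound $\rho(ys,\infty)\le \rho[0,\infty)$ combined with $\int_0^{y_0/y} 2s\,ds = O(y^{-2})$ and the fact that $\rho(y,\infty)$, being regularly varying of index $-\alpha$ with $\alpha<2$, satisfies $\rho(y,\infty)\gg y^{-2}$, shows this part is negligible. The limiting integral is evaluated through $u=s^2$ as a Beta integral:
\begin{equation*}
\int_0^\infty \frac{2s^{1-\alpha}}{(1+s^2)^2}\, ds = \int_0^\infty \frac{u^{-\alpha/2}}{(1+u)^2}\, du = B\bigl(1-\tfrac\alpha2, 1+\tfrac\alpha2\bigr) = \tfrac{\alpha}{2}\,\Gamma\bigl(1-\tfrac\alpha2\bigr)\Gamma\bigl(\tfrac\alpha2\bigr) = \frac{\pi\alpha/2}{\sin(\pi\alpha/2)},
\end{equation*}
by the reflection formula, with the usual convention for $\alpha=0$.

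For (ii)$\Rightarrow$(i) (the Tauberian direction) I would introduce the auxiliary positive Borel measure $\tilde\rho$ on $[0,\infty)$ defined by $d\tilde\rho(t) = t^2\,d\rho(t)$, so that $I(y) = \int_0^\infty \frac{1}{t^2+y^2}\, d\tilde\rho(t)$ and Proposition~\ref{stieltjes} applies directly. If $\alpha\in(0,2)$, setting $\tilde\alpha = 2-\alpha \in (0,2)$, regular variation of $I(y)$ of index $-\alpha$ is equivalent to $\tilde\rho[0,y] = \int_0^y t^2\,d\rho(t)$ being regularly varying of index $2-\alpha$, which via the integration-by-parts identity $\int_0^y t^2\,d\rho(t) = -y^2\rho(y,\infty) + 2\int_0^y t\,\rho(t,\infty)\,dt$ combined with Karamata's theorem is equivalent to $\rho(y,\infty)$ being regularly varying of index $-\alpha$. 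The boundary value $\alpha=0$ is outside the scope of Proposition~\ref{stieltjes} and must be treated separately: from the substituted form one argues that if $I(y)$ is slowly varying, then the family $s\mapsto \rho(ys,\infty)/\rho(y,\infty)$ is (along subsequences) tight and any limit must satisfy $\int_0^\infty \frac{2s}{(1+s^2)^2}\psi(s)\,ds = 1$ with $\psi$ monotone non-increasing, which together with monotonicity of $\rho(\cdot,\infty)$ forces $\psi\equiv 1$ and hence slow variation of $\rho(\cdot,\infty)$.

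The main obstacle I anticipate is the dominated convergence step near $s=0$: the naive bound $\rho(ys,\infty)\le\rho[0,\infty)$ is not summable after dividing by $\rho(y,\infty)\to 0$, so one has to exploit both the vanishing of the kernel $2s/(1+s^2)^2$ at the origin and the shrinking effective range $(0,y_0/y)$, using the constraint $\alpha<2$ to dominate $y^{-2}$ by $\rho(y,\infty)$. The $\alpha=0$ endpoint in the Tauberian direction is the other delicate point, since the change-of-measure argument degenerates there.
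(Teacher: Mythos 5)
Your proof starts from the same integration-by-parts identity
$I(y)=\int_0^\infty\frac{2ty^2}{(t^2+y^2)^2}\rho(t,\infty)\,dt$
as the paper, but the two arguments diverge afterward. The paper substitutes $s=t^2$, introduces $d\widetilde\rho(s)=\rho(\sqrt s,\infty)\,ds$, and rewrites $I(y)=y^2\int_0^\infty(s+y^2)^{-2}\,d\widetilde\rho(s)$, so that everything reduces in one stroke to the Abel--Tauber theorem for Stieltjes transforms of order $2$ (Theorem~1.7.4 of Bingham--Goldie--Teugels), which gives both implications and the constant. Your route is genuinely different and, for the Abelian direction, more elementary: the scaling form plus Potter's bound plus dominated convergence plus the Beta-function evaluation is self-contained and correct, including the treatment of the residual range $(0,y_0/y)$ where you rightly use $\rho(y,\infty)\gg y^{-2}$. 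Your reduction of the Tauberian direction for $\alpha\in(0,2)$ to Proposition~\ref{stieltjes} via $d\tilde\rho(t)=t^2\,d\rho(t)$ also works, though the final step should be phrased as an Abelian consequence of Karamata's theorem applied to $\rho(y,\infty)=\int_y^\infty t^{-2}\,d\tilde\rho(t)$ (as in Theorems~1.6.4/1.6.5 of Bingham--Goldie--Teugels): your integration-by-parts identity $\int_0^y t^2\,d\rho = -y^2\rho(y,\infty)+2\int_0^y t\rho(t,\infty)\,dt$ contains the unknown $\rho(y,\infty)$ in both terms on the right and does not by itself yield the converse.

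There is, however, a genuine gap in your $\alpha=0$ case of (ii)$\Rightarrow$(i). The assertion that a monotone non-increasing subsequential limit $\psi$ with $\psi(1)=1$ and $\int_0^\infty\frac{2s}{(1+s^2)^2}\psi(s)\,ds=1$ must be identically $1$ is not true on the strength of monotonicity alone: a non-constant non-increasing $\psi$ can have exactly this kernel average, since the kernel integrates to $1$ and the positive deficit on $(1,\infty)$ can compensate the positive excess on $(0,1)$. The mechanism that rules this out is the nonvanishing of the Mellin transform of $s\mapsto\frac{2s}{(1+s^2)^2}$ on the relevant vertical line — a Wiener-Tauberian type ingredient — and it is precisely this that the Bingham--Goldie--Teugels theorem the paper invokes encapsulates. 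So for $\alpha=0$ you either need to import such a Tauberian theorem, or give a sharper argument that uses the scale-invariance of the hypothesis (slow variation of $I(y)$ for all $\lambda$, not just along one sequence), not merely monotonicity and a single integral constraint.
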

\begin{pf}
Define $d\widetilde\rho(s)=\rho(\sqrt s,\infty)\,ds$. By a variant of
Karamata's theorem, given in Theorem 0.6(a) of~\cite{resnickbook}, as
$\alpha<2$, we have
%
%
%e2.19 ###
\begin{equation}\label{eqstieltjesrho}
\widetilde\rho[0,y]\sim\frac1{1-\alpha/2} y \rho\bigl(\sqrt
{y},\infty\bigr)
\end{equation}
is regularly varying of index $1-\alpha/2$. Then we have
\begin{eqnarray*}
\int_0^\infty\frac{t^2}{t^2+y^2} \,d\rho(t)& =&y^2 \int_0^\infty\int_0^t
\frac{2sds}{(s^2+y^2)^2} \,d\rho(t)\\
& =& y^2 \int_0^\infty\frac{2s
\rho
(s,\infty)}{(s^2+y^2)^2} \,ds = y^2 \int_0^\infty\frac{d\widetilde
\rho
(s)}{(s+y^2)^2}.
\end{eqnarray*}
Now, first applying Theorem 1.7.4 of~\cite{binghamgoldieteugels1987}
as $\widetilde\rho[0,y]$ is regularly varying of index $1-\alpha
/2\in
(0,2]$ and then \eqref{eqstieltjesrho}, we have
\[
\int_0^\infty\frac{t^2}{t^2+y^2} \,d\rho(t) \sim\frac{(1-
\alpha/2) {\pi\alpha}/{2}}{\sin({\pi\alpha}/{2})} y^2\frac
{\widetilde\rho[0,y^2]}{y^4} \sim\frac{{\pi\alpha}/{2}}{\sin
({\pi\alpha}/{2})} \rho(y,\infty).
\]
\upqed\end{pf}

%s3 ###
\section{Free subexponentiality of measures with regularly varying
tails} \label{secpfthmone}
We now use Theorems~\ref{thmerrorequiv}--\ref{thmerrorequivnew}
to prove Theorem~\ref{maintheorem-1}. We shall first look at the tail
behavior of the free convolution of two probability measures with
regularly varying tails and which are tail balanced. Theorem~\ref{maintheorem-1} will be proved by suitable choices of the two measures.
\begin{lemma} \label{lemfreetail}
Suppose $\mu$ and $\nu$ are two probability measures on $[0,\infty)$
with regularly varying tails, which are tail balanced; that is, for\vadjust{\goodbreak}
some $c>0$, we have $\nu(y,\infty)\sim c \mu(y,\infty)$. Then
\[
\mu\boxplus\nu(y,\infty) \sim(1+c) \mu(y,\infty).
\]
\end{lemma}
\begin{pf}
We shall now indicate the associated probability measures in the
remainder terms, moments and the cumulants to avoid any confusion.
Since $\mu$ and~$\nu$ are tail balanced and have regularly varying
tails, for some nonnegative integer $p$ and $\alpha\in[p,p+1]$, we
have both $\mu$ and $\nu$ in the same class $\mathcal M_{p,\alpha}$.
When $\alpha\in[p,p+1)$, depending on the choice of $p$ and $\alpha$,
we apply one of Theorems~\ref{thmerrorequiv},~\ref{thmerrorequiveqp} and~\ref{thmerrorequiv-0} on the imaginary parts of the
remainder terms in Laurent expansion of Voiculescu transforms. On the
other hand, for $\alpha=p+1$, we apply Theorem~\ref{thmerrorequivnew} on the real parts of the corresponding objects. We work out only
the case $\alpha\in[p,p+1)$ in details, while the other case $\alpha
=p+1$ is similar.

For $\alpha\in[p, p+1)$, by Theorems~\ref{thmerrorequiv}--\ref{thmerrorequiv-0}, we have
%
%e3.3 ###
%e3.2 ###
%e3.1 ###
\begin{eqnarray}
r_{\phi_\mu}(z) &\gg& z^{-1} \quad\mbox{and} \quad r_{\phi_\nu}(z) \gg
z^{-1},\label{Voi}\\
\Re r_{\phi_\mu}(-iy) &\gg& y^{-1}\quad \mbox{and}\quad \Re r_{\phi_\nu
}(-iy) \gg y^{-1},\label{realVoi}\\
\label{imagVoi}\Im r_{\phi_\mu}(iy)& \sim& -\frac{{\pi(p+1-\alpha)}/2}{\cos
({\pi
(\alpha-p)}/2)} y^{p} \mu(y,\infty) \quad\mbox{and}
\nonumber
\\[-8pt]
\\[-8pt]
\nonumber
 \Im r_{\phi
_\nu}(iy) &\sim& -\frac{{\pi(p+1-\alpha)}/2}{\cos({\pi
(\alpha
-p)}/2)} y^{p} \nu(y,\infty).
\end{eqnarray}
For $p=0$ and $\alpha\in[0,1)$, we further have
%e3.4 ###
\begin{eqnarray}\label{balance}
\Im r_{\phi_\mu}(iy) &\approx&\Re r_{\phi_\mu}(iy) \approx\mu
(y,\infty)
\quad\mbox{and}
\nonumber
\\[-8pt]
\\[-8pt]
\nonumber
\Im r_{\phi_\nu}(iy) &\approx&\Re r_{\phi_\nu
}(iy) \approx\nu(y,\infty).
\end{eqnarray}

We also know that both Voiculescu transforms and cumulants add up in
case of free convolution. Hence,
%
%
%e3.5 ###
\begin{equation}\label{sumVoi}
r_{\phi_{\mu\boxplus\nu}}(z) = r_{\phi_\mu}(z) + r_{\phi_\nu}(z).
\end{equation}
Further, we shall have
$\kappa_p(\mu\boxplus\nu)<\infty$, but $\kappa_{p+1}(\mu\boxplus
\nu
)=\infty$ and similar results hold for the moments of $\mu\boxplus
\nu$
as well. Then Theorems~\ref{thmerrorequiv}--\ref{thmerrorequiv-0}
will also apply for $\mu\boxplus\nu$. Thus, applying \eqref{sumVoi}
and its real and imaginary parts evaluated at $z=iy$, together
with \eqref{Voi}--\eqref{balance}, respectively, we get
\begin{eqnarray*}
r_{\phi_{\mu\boxplus\nu}}(z) &\gg &z^{-1} \qquad\mbox{as $z\to\infty$ n.t.},
\\
\Re r_{\phi_{\mu\boxplus\nu}}(iy) &\gg& y^{-1} \qquad\mbox{as $y\to
\infty$}
\end{eqnarray*}
and
%
%
%e3.6 ###
\begin{equation}
\qquad\Im r_{\phi_{\mu\boxplus\nu}}(iy) \sim-(1+c) \frac{{\pi
(p+1-\alpha
)}/2}{\cos({\pi(\alpha-p)}/2)} y^{p} \mu(y,\infty) \qquad\mbox{as
$y\to
\infty$}, \label{eqasymp1}
\end{equation}
which is regularly varying of index $-(\alpha-p)$. Further, for $p=0$
and $\alpha\in[0,1)$, we have
\[
\Im r_{\phi_{\mu\boxplus\nu}}(iy) \approx\Re r_{\phi_{\mu
\boxplus\nu}}(iy).
\]
In the last two steps, we also use the hypothesis that $\nu(y,\infty)
\sim c \mu(y,\infty)$ as $y\to\infty$. Thus, again using
Theorems \ref
{thmerrorequiv}--\ref{thmerrorequiv-0}, we have
%
%
%e3.7 ###
\begin{equation} \label{eqasymp2}
-\frac{{\pi(p+1-\alpha)}/2}{\cos({\pi(\alpha-p)}/2)}
y^{p} \mu
\boxplus\nu(y, \infty) \sim\Im r_{\phi_{\mu\boxplus\nu}}(iy).
\end{equation}
Combining \eqref{eqasymp1} and \eqref{eqasymp2}, the result follows.
\end{pf}

We are now ready to prove the subexponentiality of a distribution with
regularly varying tail.
\begin{pf*}{Proof of Theorem \protect\ref{maintheorem-1}}
Let $\mu$ be the probability measure on $[0,\infty)$ associated with
the distribution function $F_+$. Then $\mu$ also has regularly varying
tail of index $-\alpha$. We prove that
%
%
%e3.8 ###
\begin{equation} \label{eqindconv}
\mu^{\boxplus n}(y,\infty) \sim n \mu(y,\infty)\qquad \mbox{as $y\to
\infty$}
\end{equation}
by induction on $n$. To prove \eqref{eqindconv} for $n=2$, apply
Lemma~\ref{lemfreetail} with both the probability measures as $\mu$
and the constant $c=1$. Next assume \eqref{eqindconv} holds for
$n=m$. To prove \eqref{eqindconv} for $n=m+1$, apply Lemma~\ref{lemfreetail} again with the probability measures~$\mu$ and $\mu
^{\boxplus
m}$ and the constant $c=m$.
\end{pf*}

%s4 ###
\section{Cauchy transform of measures with regularly varying tail}
\label{secCauchy}
As a first step toward proving Theorems~\ref{thmerrorequiv}--\ref
{thmerrorequivnew}, we now collect some results about $r_G(z)$,
when the probability measure $\mu$ has regularly varying tails. These
results will be be useful in showing equivalence between the tail of
$\mu$ and $r_G(iy)$. It is easy to see by induction that
\[
\frac1{z-t} - \sum_{j=0}^p \frac{t^j}{z^{j+1}} = \biggl(\frac tz
\biggr)^{p+1} \frac1{z-t}.
\]
Integrating and multiplying by $z^{p+1}$, we get
%
%
%e4.1 ###
\begin{equation}\label{eqrG}
r_G(z) = \int_0^\infty\frac{t^{p+1}}{z-t} \,d\mu(t).
\end{equation}
We use \eqref{eqrG} to obtain asymptotic upper and lower bounds for
$r_G(z)$ as $z\to\infty$ n.t. Similar results about $r_H$ can be
obtained easily from the fact that $r_G(z)=r_H(1/z)$, but will not be
stated separately. We consider the lower bound first.

\begin{proposition}\label{proplowerbdrG}
Suppose $\mu\in\mathcal M_p$ for some nonnegative integer $p$, then
\[
z^{-1}\ll r_G(z) \qquad\mbox{as $z\to\infty$ n.t.}\vadjust{\goodbreak}
\]
\end{proposition}
\begin{pf}
We need to show that for any $\eta>0$, as $|z|\to\infty$ with $z$ in
the cone $\Gamma_\eta$, we have $|zr_G(z)|\to\infty$. Note that for
$z=x+iy\in\Gamma_\eta$, we have $|x|<\eta y$. Now, as $|z-t|^2 =
(z-t)(\bar z-t)$ and $z(\bar z-t) = |z|^2 - zt$, using \eqref{eqrG},
we have
\[
zr_G(z) = z \int_0^\infty\frac{t^{p+1}}{z-t} \,d\mu(t) = |z|^2 \int
_0^\infty\frac{t^{p+1}}{|z-t|^2} \,d\mu(t) - z \int_0^\infty\frac
{t^{p+2}}{|z-t|^2} \,d\mu(t),
\]
which gives
%
%e4.2 ###
\begin{equation}
\Re(zr_G(z)) = |z|^2 \int_0^\infty\frac{t^{p+1}}{|z-t|^2} \,d\mu(t) -
\Re z \int_0^\infty\frac{t^{p+2}}{|z-t|^2} \,d\mu(t)\label{eqrealpart}
\end{equation}
and
%e4.3 ###
\begin{equation}
\Im(zr_G(z)) = -\Im z \int_0^\infty\frac{t^{p+2}}{|z-t|^2} \,d\mu
(t).\label{eqimaginarypart}
\end{equation}

On $\Gamma_\eta$ and for $t\in[0,\eta y]$, $|t-x|\le t+|x|\le2\eta y$.
Thus, we have
%
%e4.4 ###
\begin{eqnarray} \label{eqdivergence}
\int_0^\infty\frac{|z|^2 t^{p+1}}{|z-t|^2} \,d\mu(t)& \ge&\int
_0^{\eta y}
\frac{y^2 t^{p+1}}{(t-x)^2+y^2} \,d\mu(t)
\nonumber
\\[-8pt]
\\[-8pt]
\nonumber
&\ge&\frac1{1+4\eta^2}
\int
_0^{\eta y} t^{p+1} \,d\mu(t) \to\infty,
\end{eqnarray}
as $y\to\infty$, since $\mu\in\mathcal M_p$.

Now fix $\eta>0$, and consider a sequence $\{z_n=x_n+i y_n\}$ in
$\Gamma
_\eta$, such that $|z_n|\to\infty$, that is, $|x_n|\le\eta y_n$ and
$y_n\to\infty$. Assume toward contradiction that $\{|z_n r_G(z_n)|\}$
is a bounded sequence. Then both the real and the imaginary parts of
the sequence will be bounded. However, then the boundedness of the real
part and \eqref{eqrealpart} and~\eqref{eqdivergence} give
\[
\biggl|\Re z_n \int_0^\infty\frac{t^{p+2}}{|z_n-t|^2} \,d\mu(t)\biggr|\to
\infty.
\]
Then, using \eqref{eqimaginarypart} and the fact that $|\Re z| \le
\eta\Im z$ on $\Gamma_\eta$, we have
\[
\Im(z_nr_G(z_n)) \ge\frac1\eta\biggl|\Re z_n \int_0^\infty\frac
{t^{p+2}}{|z_n-t|^2} \,d\mu(t)\biggr|\to\infty,
\]
which contradicts the fact that the imaginary part of $\{z_nr_G(z_n)\}$
is bounded and completes the proof.
\end{pf}

We now consider the upper bound for $r_G(z)$. The result and the proof
of the following proposition are inspired by Lemma 5.2(iii) of \cite
{bercovici2000functions}.
\begin{proposition}\label{propupperboundrG}
Let $\mu$ be a probability measure in the class $\mathcal M_{p,\alpha}$
for some nonnegative integer $p$ and $\alpha\in(p,p+1]$. Then, for any
$\beta\in[0,(\alpha-p)/(\alpha-p+1))$, we have
%
%
%e4.5 ###
\begin{equation}\label{equpperboundrG}
r_G(z)=\lito(z^{-\beta}) \qquad\mbox{as $z\to\infty$ n.t.}
\end{equation}
\end{proposition}

\begin{remark}
We consider the principal branch of logarithm of a complex number with
positive imaginary part, while defining the fractional powers in \eqref
{equpperboundrG} above and elsewhere.
\end{remark}

\begin{remark}
Note that \eqref{equpperboundrG} holds also for $p=\alpha$ with
$\beta=0$, which can be readily seen from Theorem 1.5 of \cite
{benaych2006taylor}.
\end{remark}

\begin{pf*}{Proof of Proposition \protect\ref{propupperboundrG}}
Define a measure $\rho_0$ as $d\rho_0(t)=t^pd\mu(t)$. Since $\mu\in
\mathcal M_p$, $\rho_0$ is a finite measure. Further, since $p<\alpha$,
using Theorem~1.6.5 of~\cite{binghamgoldieteugels1987}, we have
$\rho_0(y,\infty) \sim\frac\alpha{\alpha-p} y^p \mu(y, \infty)$,
which is regularly varying of index $-(\alpha-p)$.

Now fix $\eta>0$. It is easy to check that for $t\ge0$ and $z\in
\Gamma
_\eta$, $t/|z-t| < \sqrt{1+\eta^2}$. For $z=x+iy$, we have $|z-t|>y$
and hence for $t\in[0,y^{1/(\alpha-p+1)}]$, we have
$t/|z-t|<y^{-(\alpha
-p)/(\alpha-p+1)}$. Then, using \eqref{eqrG} and the definition
of~$\rho_0$,\looseness=-1
\begin{eqnarray*}
|r_G(z)| &\le&\int_0^{y^{{1}/{(\alpha-p+1)}}} \biggl|\frac{t}{z-t}
\biggr|\,d\rho_0(t) + \sqrt{1+\eta^2} \rho_0\bigl(y^{{1}/{(\alpha-p+1)}}, \infty\bigr)\\
&\le& y^{-(\alpha-p)/(\alpha-p+1)} \int_0^\infty t^p \,d\mu(t) + \sqrt
{1+\eta^2} \rho_0\bigl(y^{1/(\alpha-p+1)},\infty\bigr) = \lito (y^{-\beta})
\end{eqnarray*}\looseness=0
for any $\beta\in[0,(\alpha-p)/(\alpha-p+1))$, as the second term is
regularly varying of index $-(\alpha-p)/(\alpha-p+1)$. Further, for
$z=x+iy\in\Gamma_\eta$, we have $|z|=\sqrt{x^2+y^2}\le y\sqrt
{1+\eta
^2}$, and hence we have the required result.
\end{pf*}

Next we specialize to the asymptotic behavior of $r_G(iy)$, as $y\to
\infty$. Observe that
%
%
%e4.6 ###
\begin{eqnarray}\label{rGy}
\Re r_G(iy)&=&-\int_0^\infty\frac{t^{p+2}}{t^2+y^2} \,d\mu(t)\quad \mbox
{and}
\nonumber
\\[-8pt]
\\[-8pt]
\nonumber
 \Im r_G(iy)&=&-y \int_0^\infty\frac{t^{p+1}}{t^2+y^2} \,d\mu(t).
\end{eqnarray}

\begin{proposition} \label{propcauchytail}
Let $\mu$ be a probability measure in the class $\mathcal M_p$.

If $\alpha\in(p,p+1)$, then the following statements are equivalent:
\begin{enumerate}[(iii)]
\item[(i)]\hypertarget{propcauchymeastail}$\mu$ has regularly varying tail of index $-\alpha$.
\item[(ii)]\hypertarget{propcauchyrealtail}$\Re r_G(iy)$ is regularly varying of index $-(\alpha-p)$.
\item[(iii)]\hypertarget{propcauchyimagtail}$\Im r_G(iy)$ is regularly varying of index $-(\alpha-p)$.
\end{enumerate}
If any of the above statements holds, then
\begin{eqnarray*}
\frac{\sin({\pi(\alpha-p)}/2)}{{\pi(p+2-\alpha)}/2} \Re
r_G(iy)&\sim&\frac{\cos({\pi(\alpha-p)}/2)}{{\pi
(p+1-\alpha)}/2}
\Im r_G(iy)\\
&\sim&-y^p \mu(y,\infty)\qquad \mbox{as $y\to\infty$.}
\end{eqnarray*}
Further, $\Re r_G(iy)\gg y^{-1}$ and $\Im r_G(iy)\gg y^{-1}$ as $y\to
\infty$.\vadjust{\goodbreak}

If $\alpha=p$, then statements (\hyperlink{propcauchymeastail}{\textup{i}})
and (\hyperlink{propcauchyimagtail}{\textup{iii}}) above are equivalent. Also, if either of the
statements holds, then
%
%
%e4.7 ###
\begin{equation}\label{eqimagequiv}
\Im r_G(iy)\sim-\frac\pi2y^p \mu(y,\infty)\qquad \mbox{as $y\to\infty$.}
\end{equation}
Further, $\Im r_G(iy)\gg y^{-1}$ as $y\to\infty$.

If $\alpha=p+1$, then statements \textup{(\hyperlink{propcauchymeastail}{i})}
and \textup{(\hyperlink{propcauchyrealtail}{ii})} above are equivalent. Also, if
either of the statements holds, then
%
%
%e4.8 ###
\begin{equation}\label{eqrealequiv}
\Re r_G(iy)\sim-\frac\pi2 y^p \mu(y,\infty)\qquad \mbox{as $y\to
\infty$.}
\end{equation}
Further, for any $\varepsilon>0$, $\Re r_G(iy) \gg y^{-(1+\varepsilon
)}$ as $y\to\infty$.
\end{proposition}

\begin{remark}
Note that for $\alpha=p+1$, $\Re r_G(iy)$ is regularly varying of index
$-1$, and the asymptotic lower bound $\Re r_G(iy)\gg y^{-1}$ need not
hold. This causes some difficulty in the proofs of Propositions \ref
{fraction-taylor} and~\ref{inverse-taylor}. The lack of the asymptotic
lower bound has to be compensated for by the stronger upper bound
obtained in Proposition~\ref{propupperboundrG}, which holds for
$\alpha=p+1$. This is reflected in condition (\hyperlink{R4}{R4$''$}) for the class
$\mathcal R_{p,\beta}$ with $\beta>0$, defined in Section~\ref{secC-Vreln}.
Further note that the situation reverses for $\alpha=p$, as
Proposition~\ref{propupperboundrG} need not hold. The case, where
$\alpha\in(p,p+1)$ is not an integer, is simple, as the asymptotic
lower bounds hold for both the real and imaginary parts of $r_G(iy)$
(Proposition~\ref{propcauchytail}), as well as the stronger
asymptotic upper bound works (Proposition~\ref{propupperboundrG}).
However, the case of noninteger $\alpha\in(p,p+1)$ is treated
simultaneously with the case $\alpha=p$ as the class $\mathcal
R_{p,0}$; cf. Section~\ref{secC-Vreln} in Propositions \ref
{fraction-taylor} and~\ref{inverse-taylor}.
\end{remark}

\begin{pf*}{Proof of Proposition \protect\ref{propcauchytail}}
The asymptotic lower bounds for the real and the imaginary parts of
$r_G(iy)$ are immediate from (\hyperlink{propcauchyrealtail}{ii}) and
(\hyperlink{propcauchyimagtail}{iii}), respectively. So, we only need to show \eqref
{eqrealequiv} and the equivalence between (\hyperlink{propcauchymeastail}{i}) and
(\hyperlink{propcauchyrealtail}{ii}) when $\alpha\in(p,p+1]$
and \eqref{eqimagequiv} and the equivalence between (\hyperlink{propcauchymeastail}{i}) and
(\hyperlink{propcauchyimagtail}{iii}) when $\alpha\in[p,p+1)$.

Let $d\rho_j(t)=t^{p+j}\,d\mu(t)$, for $j=1, 2$. Then, by Theorem 1.6.4
of~\cite{binghamgoldieteugels1987}, we have for $\alpha\in[p,p+1)$,
$\rho_1[0,y]\sim\alpha/(p+1-\alpha) y^{p+1} \mu(y,\infty)$, which is
regularly varying of index $p+1-\alpha\in(0,1]$, and for $\alpha\in
(p,p+1]$, $\rho_2[0,y]\sim\alpha/(p+2-\alpha) y^{p+2} \mu(y,\infty)$,
which is regularly varying of index $p+2-\alpha\in[1,2)$. Further,
from \eqref{rGy}, we get
\[
\Re r_G(iy) = - \int_{0}^{\infty} \frac1{t^2+y^2}\,d\rho_2(t)\quad \mbox
{and}\quad \Im r_G(iy) = -y \int_0^\infty\frac1{t^2+y^2} \,d\rho_1(t).
\]
Then the results follow immediately from Proposition~\ref{stieltjes}.
\end{pf*}

While asymptotic equivalences between $\Re r_G(iy)$ and tail of $\mu$
for $\alpha=p$ and $\Im r_G(iy)$ and tail of $\mu$ for $\alpha=p+1$ are
not true in general, we obtain the relevant asymptotic bounds in these
cases. We also obtain the exact asymptotic orders when $p=0$.\vadjust{\goodbreak}
\begin{proposition} \label{proprGiy}
Consider a probability measure $\mu$ in the class $\mathcal M_p$.

If $\mu$ has regularly varying tail of index $-p$, then for any
$\varepsilon>0$, $\Re r_G(iy)\gg y^{-\varepsilon}$ as $y\to\infty$.
Further, if $p=0$, then $\Re r_G(iy)\sim- \mu(y, \infty)$ as $y\to
\infty$.

If $\mu$ has regularly varying tail of index $-(p+1)$, then $\Im
r_G(iy)$ is regularly varying of index $-1$ and $y^{-1} \ll\Im
r_G(iy)\ll y^{-(1-\varepsilon)}$ as $y\to\infty$, for any
$\varepsilon>0$.
\end{proposition}

\begin{remark}
Note that $\Im r_G(iy)$ is regularly varying for the case $\alpha=p+1$
in contrast to $\Re r_G(iy)$ for the case $\alpha=p>0$. Further, for
the case $\alpha=p+1$, the lower bound for $\Im r_G(iy)$ is sharper
than that for $\Re r_G(iy)$ and coincides with that of $\Im r_G(iy)$
for the case $\alpha\in[p,p+1)$ discussed in Proposition~\ref{propcauchytail}.
\end{remark}

\begin{pf*}{Proof of Proposition \protect\ref{proprGiy}}
First consider the case where $\mu$ has regularly varying tail of index
$-p$. We use the notation $d\rho_0(t)=t^pd\mu(t)$ introduced in the
proof of Proposition~\ref{propupperboundrG}. However, in the
current situation Theorem 1.6.4 of~\cite{binghamgoldieteugels1987}
will not apply. If $p=0$, then $\rho_0=\mu$ and $\rho_0(y,\infty)$ is
slowly varying. If $p>0$, observe that, as $\int t^p \,d\mu(t) <\infty$,
we have
\[
\rho_0(y,\infty)=y^p\mu(y,\infty)+p\int_0^y s^{p-1}\mu(s,\infty
)\,ds\sim
p\int_0^y s^{p-1}\mu(s,\infty)\,ds,
\]
which is again slowly varying, where we use Theorem 0.6(a) of \cite
{resnickbook}. Thus, in either case, $\rho_0(y,\infty)$ is slowly
varying and converges to zero as $y\to\infty$. Now, from \eqref{rGy}
and Proposition~\ref{stieltjes2}, we also have
\[
\Re r_G(iy) = -\int_0^\infty\frac{t^2}{t^2+y^2} \,d\rho_0(t) \sim
-\rho
_0(y,\infty)
\]
as $y\to\infty$. Since $\rho_0(y,\infty)$ is slowly varying, for any
$\varepsilon>0$, we have $|y^\varepsilon\times \Re r_G(iy)|\to\infty$ as
$y\to
\infty$. Also, for $p=0$, we have
\[
\Re r_G(iy) \sim-\rho_0(y,\infty)=-\mu(y,\infty).
\]

Next consider the case where $\mu\in\mathcal M_p$ has regularly varying
tail of index $-(p+1)$. Define again $d\rho_1(t) = t^{p+1} \,d\mu(t)$.
Then,
\[
\rho_1[0,y] = (p+1) \int_0^y s^p \mu(s,\infty) \,ds - y^{p+1} \mu
(y,\infty
) \sim(p+1) \int_0^y s^p \mu(s,\infty) \,ds
\]
is slowly varying, again by Theorem 0.6(a) of~\cite{resnickbook}. Then,
by \eqref{rGy} and Proposition~\ref{stieltjes}, we have
\[
\Im r_G(iy) = y \int_0^\infty\frac{d\rho_1(t)}{t^2+y^2} \sim\frac1y
\rho_1[0,y]
\]
is regularly varying of index $-1$. Further, $\rho_1[0,y] \to\int
_0^\infty t^{p+1} \,d\mu(t) = \infty$ as $y\to\infty$. Then the
asymptotic upper and lower bounds follow immediately.~%
\end{pf*}

%s5 ###
\section{Relationship between Cauchy and Voiculescu transforms} \label
{secC-Vreln}
The results of the previous section relate the tail of a regularly
varying probability measure and the behavior of the remainder term in
Laurent series\vadjust{\goodbreak} expansion of its Cauchy transform. In this section, we
shall relate the remainder terms in Laurent series expansion of Cauchy
and Voiculescu transforms. Finally, we collect the results from
Sections~\ref{secCauchy} and~\ref{secC-Vreln} to prove
Theorems~\ref{thmerrorequiv}--\ref{thmerrorequivnew}.

To study the relation between the remainder terms in Laurent series
expansion of Cauchy and Voiculescu transforms, we consider a class of
functions, which include the functions $H_\mu$ for the probability
measures $\mu$ with regularly varying tails. We then show that the
class is closed under appropriate operations. See Propositions~\ref
{fraction-taylor} and~\ref{inverse-taylor}.

Let $\mathcal H$ denote the set of analytic functions $A$ having a
domain $\mathcal{D}_A$ such that for all positive $\eta$, there exists
$\delta>0$ with $\Delta_{\eta,\delta}\subset\mathcal{D}_A$.

For a nonnegative integer $p$ and $\beta\in[0,1/2)$, let $\mathcal
{R}_{p,\beta}$ denote the set of all functions $A\in\mathcal H$ which
satisfy the following conditions:
\begin{enumerate}[(R1)]
\item[(R1)]\hypertarget{R1} $A$ has Taylor series expansion with real coefficients of the form
\[
A(z) = z + \sum_{j=1}^p a_j z^{j+1} + z^{p+1} r_A(z),
\]
where $a_1, \ldots, a_p$ are real numbers. For $p=0$, we interpret the
sum in the middle term as absent.
\item[(R2)]\hypertarget{R2}$z\ll r_A(z)\ll z^\beta$ as $z\to0$ n.t.
\item[(R3)]\hypertarget{R3}$\Re r_A(-iy)\gg y^{1+\beta/2}$ and $\Im r_A(-iy)\gg y$ as $y\to
0+$.
\end{enumerate}
For $p=0=\beta$, we further require that
\begin{enumerate}[(R4$'$)]
\item[(R4$'$)]\hypertarget{R4prime}$\Re r_A(-iy) \approx\Im r_A(-iy)$ as $y\to0+$.
\end{enumerate}
For $\beta\in(0,1/2)$, we further require that,
\begin{enumerate}[(R4$''$)]
\item[(R4$''$)]\hypertarget{R4}$\Re r_A(-iy)\ll y^{1-\beta/2} \mbox{ and } \Im
r_A(-iy)\ll y^{1-\beta/2} \mbox{ as $y\to0+$.}$
\end{enumerate}
Note that the functions in $\mathcal R_{p,\beta}$ satisfy (\hyperlink{R1}{R1})--(\hyperlink{R3}{R3}) for $p\geq1$. For $p=0=\beta$, the functions in
$\mathcal R_{p,\beta}$ satisfy (\hyperlink{R1}{R1})--(\hyperlink{R3}{R3}) as well
as (\hyperlink{R4prime}{R4$'$}). Finally, for nonnegative integers $p$ and $\beta\in
(0,1/2)$, the functions in $\mathcal R_{p,\beta}$ satisfy (\hyperlink{R1}{R1})--(\hyperlink{R3}{R3}) and (\hyperlink{R4}{R4$''$}).

The classes $\mathcal R_{p,\beta}$ as $p$ varies over the set of
nonnegative integers and $\beta$ varies over $[0,1/2]$, include the
functions $H_\mu$ where $\mu\in\mathcal M_{p,\alpha}$ with $p$ varying
over nonnegative integers and $\alpha$ varying over $[p,p+1]$:

\textit{Case} I: $p$ positive integer and $\alpha\in[p,p+1)$: By
Proposition~\ref{proplowerbdrG} and~\ref{propcauchytail}, we
have $H_\mu\in\mathcal R_{p,0}$.

\textit{Case} II: $p=0$, $\alpha\in[0,1)$: By Proposition \ref
{proplowerbdrG},~\ref{propcauchytail} and~\ref{proprGiy}, $H_\mu\in
\mathcal R_{0,0}$.

Proposition~\ref{proprGiy} is required to prove (\hyperlink{R4prime}{R4$'$}) for
$p=\alpha=0$ only.

\textit{Case} III: $p$ nonnegative integer, $\alpha=p+1$: By
Proposition~\ref{propupperboundrG} and~\ref{proprGiy}, $H_\mu$
will be in $\mathcal R_{p,\beta}$ for any $\beta\in(0,1/2)$.

We do not impose the condition $\Re r_A(-iy) \approx\Im r_A(-iy)$ for
$p>0$, as it may fail for some measures in $\mathcal M_{p,p}$.

The first result deals with the reciprocals. Note that $U(z)$ and
$zU(z)$ have the same remainder functions, and if one belongs to the
class $\mathcal H$, so does the other.

\begin{proposition}\label{fraction-taylor}
Suppose $zU(z)\in\mathcal{H}$ be a function belonging to $\mathcal
R_{p, \beta}$ for some nonnegative integer $p$ and $0\le\beta<1/2$,
such that $U$ does not vanish in a neighborhood of zero. Then the
reciprocal $V=1/U$ is defined and $zV(z)$ is also in $\mathcal
R_{p,\beta}$. Furthermore, we have:
\begin{enumerate}[(F1)]
\item[(F1)]\hypertarget{taylor1}$r_V(z)\sim-r_U(z)$, as $z\to0$ n.t.;
\item[(F2)]\hypertarget{taylor2}$\Re r_V(-iy)\sim-\Re r_U(-iy)$, as $y\rightarrow0+$;
\item[(F3)]\hypertarget{taylor3}$\Im r_V(-iy)\sim-\Im r_U(-iy)$, as $y\rightarrow0+$.
\end{enumerate}
\end{proposition}

The second result shows that for each of the above classes, when we
consider a bijective function from the class, its inverse is also in
the same class.\looseness=-1

\begin{proposition}\label{inverse-taylor}
Suppose $U\in\mathcal{H}$ be a bijective function with the inverse in~$\mathcal{H}$
as well and $U\in\mathcal R_{p,\beta}$ for some
nonnegative integer $p$ and $0\le\beta<1/2$. Then the inverse $V$ is
defined and is also in $\mathcal R_{p,\beta}$. Furthermore, we have:
\begin{enumerate}[(I3)]
\item[(I1)]\hypertarget{inverse-taylor1}$r_V(z)\sim-r_U(z)$, as $z\to0$ n.t.;
\item[(I2)]\hypertarget{inverse-taylor2}$\Re r_V(-iy)\sim-\Re r_U(-iy)$, as
$y\rightarrow0+$;
\item[(I3)]\hypertarget{inverse-taylor3}$\Im r_V(-iy)\sim-\Im r_U(-iy)$, as $y\rightarrow0+$.
\end{enumerate}
\end{proposition}

Next we prove Propositions~\ref{fraction-taylor} and \ref
{inverse-taylor}. In both the proofs, all the limits will be taken as
$z\to0$ n.t. or $y\to0+$, unless otherwise mentioned, and these
conventions will not be stated repeatedly. We shall also use that for
any nonnegative integer $p$ and $\beta\in[0,1/2)$, with $U\in
\mathcal
R_{p,\beta}$, we have
%
%
%e5.1 ###
\begin{equation} \label{eqrealimag}\qquad
|\Re r_U(-iy)|\le|r_U(-iy)|\ll1\quad \mbox{and}\quad |\Im r_U(-iy)|\le
|r_U(-iy)|\ll1.
\end{equation}
The proofs of Propositions~\ref{fraction-taylor} and \ref
{inverse-taylor} will be broken down into cases $p=0$ and $p\geq1$.
Each of these cases will be further split into subcases $\beta=0$ and
$\beta\in(0,1/2)$. The $p\geq1$ is more involved compared to the case
$p=0$. However, the proofs, specially that of Proposition \ref
{inverse-taylor}, have substantial parts in common for different cases.

We first prove the result regarding the reciprocal.
\begin{pf*}{Proof of Proposition~\ref{fraction-taylor}}
Note that since $zU(z)$ belongs to $\mathcal H$, given $\eta>0$, there
exists $\delta>0$, such that $\Delta_{\eta,\delta}$ is contained in the
domain of $zU(z)$, and $U$ does not vanish on $\Delta_{\eta,\delta}$.
Thus, $V(z)$ and hence $zV(z)$ will also be defined on $\Delta_{\eta
,\delta}$. So $zV(z)$ also belongs to $\mathcal H$.

Observe that if we verify (\hyperlink{taylor1}{F1})--(\hyperlink{taylor3}{F3}), then
$zV(z)$ is automatically in $\mathcal R_{p,\beta}$ as well, since
$V(z)$ and $zV(z)$ have same remainder functions. We shall prove (\hyperlink{taylor1}{F1})--(\hyperlink{taylor3}{F3})
using the fact that $V(z)=1/U(z)$ and the
properties of $zU(z)$ as an element of $\mathcal R_{p,\beta}$.\vadjust{\goodbreak}

\textit{Case} I: $p=0$. Let $zU(z)=z+zr_U(z)$ be a function in this class.
Then $V(z)=1-r_U(z)+\bigo(|r_U(z)|^2)$. By uniqueness of Taylor's
expansion from Lemma A.1 of~\cite{benaych2006taylor}, we have
%
%
%e5.2 ###
\begin{equation} \label{eqrecipremainder}
r_V(z)=-r_U(z)+\bigo(|r_U(z)|^2).
\end{equation}
Since, by (\hyperlink{R2}{R2}), $r_U(z)\ll1$, we have $r_V(z)\sim-r_U(z)$, which
checks (\hyperlink{taylor1}{F1}).

Further, evaluating \eqref{eqrecipremainder} at $z=-iy$ and equating
the real and the imaginary parts, we have
\[
\Re r_V(-iy)=-\Re r_U(-iy)+\bigo(|r_U(-iy)|^2)
\]
and
\[
\Im
r_V(-iy)=-\Im r_U(-iy)+\bigo(|r_U(-iy)|^2).
\]
Thus, to obtain the equivalences (\hyperlink{taylor2}{F2}) and (\hyperlink{taylor3}{F3}),
it is enough to show that $|r_U(-iy)|^2 = |\Re r_U(-iy)|^2 + |\Im
r_U(-iy)|^2$ is negligible with respect to both the real and the
imaginary parts of $r_U(-iy)$. We prove the negligibility separately
for two subcases $\beta=0$ and $\beta\in(0,1/2)$.

\textit{Subcase} Ia: $p=0$, $\beta=0$. Using \eqref{eqrealimag} and
$\Re r_U(-iy) \approx\Im r_U(-iy)$ from (\hyperlink{R4prime}{R4$'$}), we have the
required negligibility condition.

\textit{Subcase} Ib: $p=0$, $\beta\in(0,1/2)$. Using (\hyperlink{R3}{R3})
and (\hyperlink{R4}{R4$''$}), we have
\[
\frac{|\Im r_U(-iy)|^2}{|\Re r_U(-iy)|} = \frac{y^{1+\beta/2}}{|\Re
r_U(-iy)|} \biggl(\frac{|\Im r_U(-iy)|}{y^{1-\beta/2}}\biggr)^2
y^{1-3\beta/2}\to0
\]
and
\[
\frac{|\Re r_U(-iy)|^2}{|\Im r_U(-iy)|} = \frac{y}{|\Im r_U(-iy)|}
\biggl(\frac{|\Re r_U(-iy)|}{y^{1-\beta/2}}\biggr)^2 y^{1-\beta}\to0.
\]
They, together with \eqref{eqrealimag}, give the required negligibility
condition, thus proving~(\hyperlink{taylor2}{F2}) and
(\hyperlink{taylor3}{F3}).

\textit{Case} II: $p\geq1$. Let $zU(z)= z + \sum_{j=1}^p u_j z^{j+1} +
z^{p+1} r_U(z)$ be a function in this class. Note that, as $p\ge1$ and
by (\hyperlink{R2}{R2}), as $z\ll r_U(z)$, we have $\sum_{j=1}^p u_j z^j + z^p
r_U(z) = u_1 z + \bigo( z r_U(z) )$. Thus, using (\hyperlink{R2}{R2}), we have
\begin{eqnarray*}
V(z)&=&1+\sum_{j=1}^p (-1)^j \Biggl(\sum_{m=1}^p u_m z^{m} + z^{p}
r_U(z)\Biggr)^j\\
&&{}+ (-1)^{p+1}u_1^{p+1} z^{p+1} + \bigo( z^{p+1} r_U(z) ).
\end{eqnarray*}

Now we expand the second term on the right-hand side. As $z\ll r_U(z)$
from~(\hyperlink{R2}{R2}), all powers of $z$ with indices greater than $(p+1)$
can be absorbed in the last term on the right-hand side. Then collect
the $(p+1)$th powers of $z$ in the second and third terms to get $c_1
z^{p+1}$ for some real number $c_1$. The remaining powers of $z$ form a
polynomial $P(z)$ of degree at most $p$ with real coefficients. Finally
we consider the terms containing some power of $r_U(z)$. It will
contain terms of the form $z^{l_1} (z^p r_U(z))^{l_2}$ for integers
$l_1\ge0$ and $l_2\ge1$, with the leading term being $-z^p r_U(z)$.
Since $p\ge1$ and from (\hyperlink{R2}{R2}) we have $r_U(z)\ll1$, the remaining
terms can be absorbed in the last term on the right-hand side. Thus we get
\[
V(z)=1+P(z)-z^pr_U(z)+ c_1 z^{p+1} + \bigo(z^{p+1} r_U(z)).
\]
By uniqueness of Taylor series expansion from Lemma A.1 of \cite
{benaych2006taylor}, we have
\[
r_V(z) = -r_U(z)+c_1 z + \bigo(zr_U(z)).
\]

The form of $r_V$ immediately gives $r_V(z) \sim-r_U(z)$, since $z\ll
r_U(z)$, by (\hyperlink{R2}{R2}). This proves (\hyperlink{taylor1}{F1}).

Also, using \eqref{eqrealimag}, $\Im r_V(-iy) = -\Im r_U(-iy) +
\bigo(y)$ and as $y\ll\Im r_U(-iy)$ from (\hyperlink{R3}{R3}), we have $\Im
r_V(-iy) \sim-\Im r_U(-iy)$. This shows (\hyperlink{taylor3}{F3}). Further, as
$c_1$ is real, $\Re r_V(-iy) =-\Re r_U(-iy) + \bigo( y |r_U(-iy)|)$.
Thus, to conclude (\hyperlink{taylor2}{F2}), it is enough to show that $y
|r_U(-iy)| \ll\Re r_U(-iy)$, for which it is enough to show that $y
\Im r_U(-iy) \ll\Re r_U(-iy)$. We show this separately for two subcases.

\textit{Subcase} IIa: $p\geq1$, $\beta=0$. We have by (\hyperlink{R3}{R3}),
\[
\frac{y \Im r_U(-iy)}{\Re r_U(-iy)} =
\frac{y}{\Re r_U(-iy)} \cdot\Im r_U(-iy).
\]

\textit{Subcase} IIb: $p\geq1$, $\beta\in(0,1/2)$. Using the
properties (\hyperlink{R3}{R3}) and (\hyperlink{R4}{R4$''$}) we get
\[
\frac{y \Im r_U(-iy)}{\Re r_U(-iy)} =
\frac{y^{1+\beta/2}}{\Re r_U(-iy)} \cdot\frac{\Im
r_U(-iy)}{y^{1-\beta
/2}} \cdot y^{1-\beta}.
\]

It is easy to see that the limit is zero in either subcase.
\end{pf*}

Before proving the result regarding the inverse, we provide a result
connecting a function in the class $\mathcal H$ and its derivative.
\begin{lemma}\label{lemderiv}
Let $v\in\mathcal H$ satisfy $v(z)=\lito(z^\beta)$ as $z\to0$ n.t., for
some real number $\beta$. Then $v^\prime(z) = \lito(z^{\beta-1})$ as
$z\to0$ n.t.
\end{lemma}
\begin{pf}
The result for $\beta=0$ follows from the calculations in the proof of
Proposition A.1(ii) of~\cite{benaych2006taylor}. For the general case,
define $w(z) = z^{-\beta} v(z)$. Then $w\in\mathcal H$ and
$w(z)=\lito
(1)$. So by the case $\beta=0$, we have $w^\prime(z) = -\beta
z^{-\beta
-1} v(z) + z^{-\beta} v^\prime(z)=\lito(z^{-1})$. Thus, $z w^\prime(z)
= -\beta z^{-\beta} v(z) + z^{-(\beta-1)} v^\prime(z)$, where the
left-hand side and the first term on the right-hand side are $\lito
(1)$, and hence the second term on the right-hand side is $\lito(1)$
as well.
\end{pf}

We are now ready to prove the result regarding the inverse.\vadjust{\goodbreak}
\begin{pf*}{Proof of Proposition \protect\ref{inverse-taylor}} We begin with
some estimates which work for all values of $p$ and $\beta$ before
breaking into cases and subcases.
Since $U$ is of the form
\[
U(z) = z + \sum_{j=1}^p u_j z^{j+1} + z^{p+1} r_U(z)
\]
and $r_U(z)\ll1$, by Proposition A.3 of~\cite{benaych2006taylor}, the
inverse function $V$ also has the same form with the remainder term
$r_V$ satisfying
%
%
%e5.3 ###
\begin{equation}\label{eqrv}
r_V(z)\ll1.
\end{equation}
Also note that $V(z)\sim z$. Further, Lemma A.1 of \cite
{benaych2006taylor} shows that the coefficients are determined by the
limits of the derivatives of the function at $0$. Hence, the real
coefficients of $U$ guarantee that the coefficients of $V$ are real. So
we only need to check the asymptotic equivalences of the remainder
functions given in (\hyperlink{inverse-taylor1}{I1})--(\hyperlink{inverse-taylor3}{I3}). We
shall achieve this by analyzing $I(z)=r_U(V(z))-r_U(z)$, the fact that
$U(V(z))=z$ and the properties of $U$ as an element in $\mathcal
R_{p,\beta}$. For that purpose, we define
\[
I(z)=r_U(V(z))-r_U(z)=\int_{\gamma_z} r_U^\prime(\zeta) \,d\zeta,
\]
where $\gamma_z$ is the closed line segment joining $z$ and $V(z)$.
Using the part (a) in the proof of Proposition A.3 of \cite
{benaych2006taylor}, given any $\eta>0$, we have for all small enough
$\delta>0$,
\[
\Delta_{2\eta, 2\delta} \subset\mathcal D_U\quad \mbox{and}\quad
V(\Delta_{\eta, \delta}) \subset\Delta_{2\eta, 2\delta}.
\]
Thus, given any $\eta>0$, there exists $\delta>0$, such that whenever
$z\in\Delta_{\eta, \delta}$, $V(z)$ belongs to $\Delta_{2\eta,
2\delta
}$. Note that $\Delta_{2\eta, 2\delta}$ is a convex set. Hence,
whenever $z\in\Delta_{\eta,\delta}$, $\gamma_z$ is contained in
$\Delta
_{2\eta, 2\delta}\subset\mathcal D_U$, and $r_U^\prime$ is defined on
the entire line segment $\gamma_z$. We shall need the following
estimate, that
\[
|I(z)| \le|\gamma_z| \sup_{\zeta\in\gamma_z} |r_U^\prime(\zeta
)| =
|V(z)-z| \sup_{\zeta\in\gamma_z} |r_U^\prime(\zeta)| = |V(z)-z|
|r_U^\prime(\zeta_0(z))|,
\]
for some $\zeta_0(z)\in\gamma_z$, since $\gamma_z$ is compact. Note
that $\zeta_0(z)=z+\theta(z)(V(z)-z)$, for some $\theta(z)\in[0,1]$ and
hence $\zeta_0(z)\sim z$. Now, $r_U(z)=\lito(z^\beta)$ by (\hyperlink{R2}{R2}),
and thus, by Lemma~\ref{lemderiv}, we have $r_U^\prime(\zeta
_0(z)) =
\lito(\zeta_0(z)^{\beta-1}) = \lito(z^{\beta-1})$. Further estimates
for $I(z)$ depend on the functions of $V(z)$ which are separate for the
cases $p=0$ and $p\geq1$. Using $V(z)=z+zr_V(z)$ for $p=0$ and
$V(z)=z+\bigo(z^2)$ for $p\ge1$, we have
%
%
%e5.4 ###
\begin{equation}\label{eqIest}
|I(z)| =
\cases{
\lito(z^\beta r_V(z)), &\quad $\mbox{for $p=0$,}$\vspace*{2pt}\cr
\lito(z^{1+\beta}), &\quad $\mbox{for $p\ge1$.}$}
\end{equation}

\textit{Case} I: $p=0$. Then $U(z)=z+zr_U(z)$ and $V(z)=z+zr_V(z)$. Using
$U(V(z))=z$ and $I(z)=r_U(V(z))-r_U(z)$,\vadjust{\goodbreak} we get $0 = zr_V(z) +
(z+zr_V(z))\times (r_U(z)+I(z))$. Further canceling $z$ and using \eqref{eqrv}, we have
%
%
%e5.5 ###
\begin{equation}\label{eqremp0}
0=r_U(z)+r_V(z)+r_U(z)r_V(z)+\bigo(I(z)).
\end{equation}
Using \eqref{eqIest} for $p=0$ and $r_U(z)\ll1$ from (\hyperlink{R2}{R2}), we
have $r_V(z)\sim-r_U(z)$, which proves (\hyperlink{inverse-taylor1}{I1}).
Further, using (\hyperlink{R2}{R2}) and evaluating at $z=-iy$, we have, for
$\beta
\in[0,1/2)$,
%
%
%e5.6 ###
\begin{equation} \label{eqr0beta}
|r_V(-iy)|\ll y^\beta.
\end{equation}
Evaluating \eqref{eqremp0} at $z=iy$ and equating the real and the
imaginary parts, we have
%
%e5.7 ###
\begin{equation}
\qquad 0=\Re r_U(-iy)+\Re r_V(-iy) + \bigo(|r_U(-iy)| |r_V(-iy)|) + \bigo
(|I(-iy)|)\label{eqinvrealp0}
\end{equation}
and
%e5.8 ###
\begin{equation}
\qquad 0=\Im r_U(-iy)+\Im r_V(-iy) + \bigo(|r_U(-iy)| |r_V(-iy)|) + \bigo
(|I(-iy)|).\label{eqinvimagp0}
\end{equation}
We split the proofs of (\hyperlink{inverse-taylor2}{I2}) and (\hyperlink{inverse-taylor3}{I3}) for the case $p=0$ into further subcases $\beta=0$
and $\beta\in(0,1/2)$.

\textit{Subcase} Ia: $p=0$, $\beta=0$. By (\hyperlink{inverse-taylor1}{I1}) for
$z=-iy$ and (\hyperlink{R4prime}{R4$'$}), we have
\[
|I(-iy)|\ll|r_V(-iy)|\sim|r_U(-iy)|\approx|\Re r_U(-iy)|\approx|\Im
r_U(-iy)|.
\]
Thus, the last term on the right-hand side of \eqref{eqinvrealp0}
and \eqref{eqinvimagp0} are negligible with respect to $\Re
r_U(-iy)$ and $\Im r_U(-iy)$, respectively. Then, further using
$r_U(-iy)\to0$ from (\hyperlink{R2}{R2}), the third term on the right-hand side
of \eqref{eqinvrealp0} and \eqref{eqinvimagp0} are negligible
with respect to $\Re r_U(-iy)$ and $\Im r_U(-iy)$, respectively, and
hence we get $\Re r_U(-iy)\sim-\Re r_V(-iy)$ and $\Im r_U(-iy)\sim
-\Im
r_V(-iy)$, which prove (\hyperlink{inverse-taylor2}{I2}) and~(\hyperlink{inverse-taylor3}{I3}).

\textit{Subcase} Ib: $p=0$, $\beta\in(0,1/2)$. We have, by (\hyperlink{R3}{R3})
and~(\hyperlink{R4}{R4$''$}),
\[
y^\beta\frac{|\Im r_U(-iy)|}{|\Re r_U(-iy)|} = \frac{|\Im
r_U(-iy)|}{y^{1-\beta/2}} \frac{y^{1+\beta/2}}{|\Re r_U(-iy)|} \to0
\]
and
\[
y^\beta\frac{|\Re r_U(-iy)|}{|\Im r_U(-iy)|} = \frac{|\Re
r_U(-iy)|}{y^{1-\beta/2}} \frac{y}{|\Im r_U(-iy)|} y^{\beta/2} \to0.
\]
They, together with \eqref{eqrealimag}, give $y^\beta|r_U(-iy)|$ which is
negligible with respect to both the real and the imaginary parts of
$r_U(-iy)$. Further, using \eqref{eqIest} and \eqref{eqr0beta},
respectively, we have
\begin{eqnarray*}
|I(-iy)|&\ll& y^\beta|r_V(-iy)| \sim y^\beta|r_U(-iy)| \quad\mbox{and}\\
 |r_U(-iy) r_V(-iy)| &\ll& y^\beta|r_U(-iy)|.
\end{eqnarray*}
Thus, both $|I(-iy)|$ and $|r_U(-iy) r_V(-iy)|$ which are the last two
terms of~\eqref{eqinvrealp0} and \eqref{eqinvimagp0}, are
negligible with respect to both the real and the imaginary parts of
$r_U(-iy)$. Then, from \eqref{eqinvrealp0} and \eqref{eqinvimagp0}, we immediately have $\Re r_U(-iy)\sim-\Re r_V(-iy)$ and $\Im
r_U(-iy)\sim-\Im r_V(-iy)$, which prove (\hyperlink{inverse-taylor2}{I2})
and~(\hyperlink{inverse-taylor3}{I3}).

\textit{Case} II: $p\ge1$. In this case $U(z)=z+\sum
_{j=1}^pu_jz^{j+1}+z^{p+1}r_U(z)$ and $V(z) = z + \sum
_{j=1}^pv_jz^{j+1} + z^{p+1}r_V(z) = z(1+v_1z(1+\lito(1)))$. Using
$z=U(V(z))$ and canceling $z$ on both sides, we have
%
%e5.9 ###
\begin{eqnarray} \label{eqcompose}
\qquad 0 &=& \sum_{j=1}^pv_jz^{j+1} + z^{p+1}r_V(z) + \sum_{m=1}^p u_m \Biggl( z
+ \sum_{j=1}^pv_jz^{j+1} + z^{p+1}r_V(z) \Biggr)^{m+1}
\nonumber
\\[-8pt]
\\[-8pt]
\nonumber
&&{} + z^{p+1}
\bigl(r_U(z)+I(z)\bigr) \bigl(1+(p+1)v_1z\bigl(1+\lito(1)\bigr)\bigr).
\end{eqnarray}
Note that all the coefficients on the right-hand side are real. We
collect the powers of $z$ up to degree $p+1$ on the right side in the
polynomial $Q(z)$. Let $c'\in\mathbb R$ be the coefficient of $z^{p+2}$
on the right side. The remaining powers of $z$ on the right-hand side
will be $\bigo(z^{p+3})$. We next consider the terms with $r_V(z)$ as a
factor and observe that $z^{p+1} r_V(z)$ is the leading term and the
remaining terms contribute $\bigo(z^{p+2} r_V(z))$. Finally, the last
term on the right-hand side gives $z^{p+1} r_U(z) + \bigo(z^{p+2}
r_U(z)) + \bigo(z^{p+1} I(z))$. Since $z\ll r_U(z)$ by (\hyperlink{R2}{R2}), the
term $\bigo(z^{p+3})$ can be absorbed in $\bigo(z^{p+2} r_U(z))$.
Combining the above facts and dividing \eqref{eqcompose} by
$z^{p+1}$, we get
%
%
%e5.10 ###
\begin{eqnarray}
\label{eqcasep1aftercompose}
0&=&z^{-(p+1)} Q(z) + \bigl( r_U(z) + c' z + \bigo(I(z)) + \bigo(z r_U(z))
\bigr)
\nonumber
\\[-8pt]
\\[-8pt]
\nonumber
&&{}+ \bigl( r_V(z) + \bigo(z r_V(z)) \bigr).
\end{eqnarray}

%Now, the first group of terms within the bracket is $r_U(z)(1+
As $I(z)\ll z^{1+\beta}\ll z\ll r_U(z)$ by \eqref{eqIest}
and (\hyperlink{R2}{R2}), we have $r_U(z)+c'z+\bigo(I(z))+\bigo(zr_U(z))=r_U(z)(1+\lito
(1))$. Also $r_V(z)+\bigo(zr_V(z))=r_V(z)(1+\lito(1))$. Thus, the last
two terms on the right-hand side of \eqref{eqcasep1aftercompose} goes
to zero. However, the first term on the right-hand side of \eqref{eqcasep1aftercompose}, $Q$ being a polynomial of degree at most $p$,
becomes unbounded unless $Q\equiv0$. So we must have $Q\equiv0$. Thus,
 \eqref{eqcasep1aftercompose} simplifies to
%
%
%e5.11 ###
\begin{equation}\label{eqcompose2}
r_U(z) + c' z + \bigo(I(z)) + \bigo(z r_U(z)) = - r_V(z) + \bigo(z r_V(z)).
\end{equation}
As observed earlier, the left-hand side is $r_U(z) (1+\lito(1))$, and
the right-hand side is $- r_V(z) (1+\lito(1))$, giving $r_U(z) \sim-
r_V(z)$, which proves (\hyperlink{inverse-taylor1}{I1}).

Further, as in the case $p=0$, we have \eqref{eqr0beta} from
$r_U(z)\sim-r_V(z)$. Also, \eqref{eqcompose2} becomes
%
%
%e5.12 ###
\begin{equation} \label{eqcompose3}
-r_V(z) = r_U(z) + c' z + \bigo(I(z)) + \bigo(z r_U(z)).
\end{equation}
Evaluating \eqref{eqcompose3} at $z=-iy$ and equating the imaginary
parts, we have, using~\eqref{eqIest},
\[
-\Im r_V(-iy) = \Im r_U(-iy) + \bigo(y).
\]
This gives (\hyperlink{inverse-taylor3}{I3}), that is, $-\Im r_V(-iy) \sim\Im
r_U(-iy)$, since $y\ll\Im r_U(-iy)$ by~(\hyperlink{R3}{R3}).\vadjust{\goodbreak}

Evaluating \eqref{eqcompose2} at $z=-iy$ again and now equating the
real parts, we have, as~$c'$ is real,
\[
-\Re r_V(-iy) = \Re r_U(-iy) + \bigo(|I(-iy)|) + \bigo(y|r_U(-iy)|).
\]
From \eqref{eqIest} and (\hyperlink{R3}{R3}), we have $|I(-iy)|\ll
y^{1+\beta}
\ll\Re r_U(-iy)$. Thus, to obtain~(\hyperlink{inverse-taylor2}{I2}), that is,
$-\Re r_V(-iy) \sim\Re r_U(-iy)$, we only need to show that\break
$y|r_U(-iy)|\ll\Re r_U(-iy)$, which follows using $r_U(-iy)\sim
-r_V(-iy)$, \eqref{eqr0beta} and (\hyperlink{R3}{R3}), since
\[
\frac{y|r_U(-iy)|}{|\Re r_U(-iy)|} = \frac{y^{1+\beta/2}}{|\Re
r_U(-iy)|} \frac{|r_U(-iy)|}{y^\beta} y^{\beta/2}.
\]
\upqed\end{pf*}

We wrap up the article by collecting the results from Sections \ref
{secCauchy} and~\ref{secC-Vreln} and proving Theorems~\ref{thmerrorequiv}--\ref{thmerrorequivnew}.
\begin{pf*}{Proofs of Theorems
\protect\ref{thmerrorequiv}--\protect\ref{thmerrorequivnew}}
We shall prove all the theorems together, as the proofs are very similar.

The statements involving the tail of the probability measure $\mu$ and
the remainder term in Laurent expansion of Cauchy transform, $r_{G_\mu
}$ can be obtained from the results in Section~\ref{secCauchy} as follows:
For all the theorems, the equivalence of the
statements (\hyperlink{tail}{i})
and (\hyperlink{Cauchyremainder}{ii}) about the tail of the probability measure
and Cauchy transform (the imaginary part in Theorems~\ref{thmerrorequiv}--\ref{thmerrorequiv-0}
and the real part in Theorem~\ref{thmerrorequivnew}) are given in Proposition~\ref{propcauchytail}. The
asymptotic equivalences between the tail of the measure and (the real
and the imaginary parts of) the remainder term in Laurent series
expansion of Cauchy transform, given in \eqref{imrg-rphi}, \eqref
{rerg-rphi}, \eqref{imrg-rphieqp}, \eqref{imrg-rphi-0} and \eqref
{rerg-rphinew} are also given in Proposition~\ref{propcauchytail}.
The similar asymptotic equivalence in \eqref{rerg-rphi-0} follows from
Propositions~\ref{propcauchytail} and~\ref{proprGiy} for the
cases $\alpha\in(0,1)$ and $\alpha=1$ respectively. We consider the
asymptotic upper and lower bounds next. The asymptotic lower bounds
in \eqref{rg-rphi}, \eqref{rg-rphieqp}, \eqref{rg-rphi-0}
and \eqref
{rg-rphinew} follow from Proposition~\ref{proplowerbdrG}. The
asymptotic upper bound in \eqref{rg-rphinew} follows from
Proposition~\ref{propupperboundrG}. The asymptotic lower bounds
in \eqref{imrg-rphi}, \eqref{rerg-rphi}, \eqref{imrg-rphieqp}, \eqref
{imrg-rphi-0} and \eqref{rerg-rphinew} follow from Proposition \ref
{propcauchytail}. The asymptotic upper bound in \eqref{rerg-rphinew} follows from the fact that $y^p \mu(y,\infty)$ is a regularly
varying function of index $-1$. The asymptotic lower bound in \eqref
{rerg-rphieqp} follows from Proposition~\ref{proprGiy}, while the
asymptotic lower bound in \eqref{rerg-rphi-0} follows as the tail of
the measure is regularly varying of index $-\alpha$ with $\alpha\in
[0,1)$. Finally both the asymptotic bounds in \eqref{imrg-rphinew}
follow from Proposition~\ref{proprGiy}.

To complete the proofs of Theorems~\ref{thmerrorequiv}--~\ref{thmerrorequivnew}, we need to check the equivalence of the
statements (\hyperlink{Cauchyremainder}{ii})
and (\hyperlink{Voiculescuremainder}{iii})
involving the remainder terms in Laurent expansion of Cauchy and
Voiculescu transforms for all the theorems and the asymptotic
equivalences between the remainder terms in Laurent series expansion of
Cauchy and Voiculescu transforms and their real and imaginary parts
given in \eqref{rg-rphi}--\eqref{imrg-rphinew}.\vadjust{\goodbreak} Note that all these
claims about Cauchy and Voiculescu transforms of $\mu$ have analogs
about $H_\mu$ and $R_\mu$, due to the facts that $r_G(z)=r_H(1/z)$ and
$r_\phi(z)=r_R(1/z)$. We shall actually deal with the functions $H_\mu$
and $R_\mu$.

For any probability measure $\mu\in\mathcal M_p$, the function
$H\equiv H_\mu$ is invertible, belongs to the class $\mathcal H$ and
the leading term of its Taylor expansion is $z$. Further, by
Proposition A.3 of~\cite{benaych2006taylor}, the above statement about
$H$ is equivalent to the same statement about its inverse, denoted by
$L\equiv L_\mu$. Since the leading term of Taylor expansion of $L$ has
leading term $z$, the leading term of Taylor's expansion of $L(z)/z$ is
$1$, and it is also in $\mathcal H$. Define $K(z)=z/L(z)$. Then $K$ is
also in $\mathcal H$, and its Taylor expansion has leading term $1$. We
shall also use the following facts obtained from~\eqref{eqRphidefn}:
%
%
%e5.13 ###
\begin{equation} \label{eqR-K}
zR_\mu(z) = \bigl(K(z) - 1\bigr) \quad\mbox{and}\quad zK(z) = z \bigl(1+zR_\mu(z)\bigr).
\end{equation}
Hence a Taylor expansion of $K$ will also lead to a Taylor expansion of
$R$ of degree one less than that of $K$. However, due to the definition
of the remainder term of the Taylor expansion given in \eqref{eqdefremainder}, the corresponding remainder terms will be related by
$r_K\equiv r_R$. Thus, we can move from the function $r_H$ to $r_K$
$(\equiv r_R)$ through inverse and reciprocal and vice versa as follows:
\begin{eqnarray} \label{scheme}
H(z)&& \mathop{\hbox to 2cm{\leftarrowfill\hspace*{-2pt}\rightarrowfill}}_{\mathrm{Proposition\ \scriptsize{\ref
{inverse-taylor}}}}^{L(z)=H^{-1}(z)} L(z)=z\cdot\frac{L(z)}z
\nonumber
\\[-8pt]
\\[-8pt]
\nonumber
&&\mathop{\hbox to 2cm{\leftarrowfill\hspace*{-2pt}\rightarrowfill}}_{\mathrm{Proposition\ \scriptsize{\ref
{fraction-taylor}}}}^{K(z)=
{z}/{L(z)}} zK(z) \mathop{\hbox to 2cm{\leftarrowfill\hspace*{-2pt}\rightarrowfill}}_{r_K=r_R}^{R(z)={(K(z)-1)}/z} R(z).
\end{eqnarray}
These observations set up the stage for Propositions \ref
{fraction-taylor} and~\ref{inverse-taylor}. We shall use the class
$\mathcal R_{p,0}$ for Theorems~\ref{thmerrorequiv}--\ref{thmerrorequiv-0} and the class $\mathcal R_{p,\beta}$ with any $\beta\in
(0,1/2)$ for Theorem~\ref{thmerrorequivnew}.

Suppose $\mu\in\mathcal M_p$ with $\alpha\in[p,p+1)$. This condition
holds for Theorems~\ref{thmerrorequiv}--\ref{thmerrorequiv-0},
and we prove these three theorems first. In these cases, $H_\mu(z)$ and
$zK(z) = z(1+zR_\mu(z))$ necessarily have Taylor expansions of the form
given in the hypothesis (\hyperlink{R1}{R1}) for the class $\mathcal R_{p,0}$
with $r_H(z)\ll1$ and $r_R(z)\ll1$ as $z\to\infty$.

For all three theorems, first assume statement (\hyperlink{Cauchyremainder}{ii})
that $\Im r_G(iy)$ is regularly varying of index $-(\alpha-p)$. Then,
from the already proven lower bounds in \eqref{rg-rphi}--\eqref
{rerg-rphi-0}, we have the asymptotic lower bounds for $r_G(z)$, $\Re
r_G(iy)$ and $\Im r_G(iy)$ under the setup of each of the three
theorems. They translate to the asymptotic lower bounds for the
function $H_\mu$, as required by the hypotheses (\hyperlink{R2}{R2}) and (\hyperlink{R3}{R3}).
The asymptotic upper bound in (\hyperlink{R2}{R2}) holds, as the remainder
term in Taylor series expansion of $H$ satisfies $r_H\ll1$. For
Theorem~\ref{thmerrorequiv-0}, we have $p=0$, and we need to check
the extra condition (\hyperlink{R4prime}{R4$'$}), which follows from the already
proven asymptotic equivalences \eqref{imrg-rphi-0} and \eqref
{rerg-rphi-0}. Thus, for each of Theorems~\ref{thmerrorequiv}--\ref
{thmerrorequiv-0}, $H_\mu$ belongs to $\mathcal R_{p,0}$.

We now refer to the schematic diagram given in \eqref{scheme}. As
$H_\mu
$ is also invertible with $L=H^{-1}\in\mathcal H$, by Proposition\vadjust{\goodbreak} \ref
{inverse-taylor}, we also have $L\in\mathcal R_{p,0}$ and $r_H(z) \sim
- r_L(z)$, $\Re r_H(-iy) \sim-\Re r_L(-iy)$ and $\Im r_H(-iy) \sim
-\Im r_L(-iy)$. Clearly, then Proposition~\ref{fraction-taylor} applies
to the function $L(z)/z$, which has reciprocal $K\in\mathcal H$. Thus,
$r_K$ and $r_L$ satisfy the relevant asymptotic equivalences.
Furthermore, since, $r_R \equiv r_K$, combining, we have $r_H(z) \sim
r_R(z)$, $\Re r_H(-iy) \sim\Re r_R(-iy)$ and $\Im r_H(-iy) \sim\Im
r_R(-iy)$. Further, for Theorem~\ref{thmerrorequiv-0}, we have $p=0$
and $H\in\mathcal R_{p,0}$ satisfies (\hyperlink{R4prime}{R4$'$}). Hence, we also
have $\Re r_R(-iy) \approx\Im r_R(-iy)$. Then $R_\mu$ inherits the
appropriate properties from $H_\mu$ and passes them on to $\phi_\mu$,
which gives us the statement (\hyperlink{Voiculescuremainder}{iii}) about the
remainder term in Laurent expansion of Voiculescu transform in each of
Theorems~\ref{thmerrorequiv}--\ref{thmerrorequiv-0}.

Conversely, assume the statement (\hyperlink{Voiculescuremainder}{iii}). Then the
assumptions on $r_\phi$ imply the analogous properties for $r_R\equiv
r_K$. Further, as $\mu$ is in $\mathcal M_p$, $zK(z) = z(1+zR_\mu(z))$
satisfies the hypothesis~(\hyperlink{R1}{R1}) for the class $\mathcal R_{p,0}$.
Also, the remainder term of Taylor series expansion of $zK(z)$ is also
given by $r_R\equiv r_K\ll1$. The lower bound for the imaginary part of
the remainder term in the hypothesis (\hyperlink{R3}{R3}) follows from its
regular variation and the fact that $\alpha\in[p, p+1)$. The lower
bound in the hypothesis~(\hyperlink{R2}{R2}) is part of the statement (\hyperlink{Voiculescuremainder}{iii}). The lower bound for the real part of the
remainder term in the hypothesis (\hyperlink{R3}{R3}) is also a part of the
statement (\hyperlink{Voiculescuremainder}{iii}) for Theorems~\ref{thmerrorequiv} and~\ref{thmerrorequiveqp}, while it follows from the
statement (\hyperlink{Voiculescuremainder-0}{iii}) for Theorem~\ref{thmerrorequiv-0}, as both the real and imaginary parts become asymptotically
equivalent and regularly varying of index $\alpha$ with $\alpha\in
[0,1)$. Finally, the asymptotic equivalence in (\hyperlink{R4prime}{R4$'$}) for
Theorem~\ref{thmerrorequiv-0} is a part of the statement (\hyperlink{Voiculescuremainder-0}{iii}).
Thus, again for each of Theorems~\ref{thmerrorequiv}--\ref{thmerrorequiv-0}, $zK(z)$ belongs to $\mathcal
R_{p,0}$. Then apply Proposition~\ref{fraction-taylor} on $K$ and then
Proposition~\ref{inverse-taylor} on $z/K(z)=L(z)$ to obtain $H_\mu(z)$.
Arguing, by checking the asymptotic equivalences as in the direct case,
we obtain the required conclusions about $r_H$ and hence $r_G$ given in
the statement (\hyperlink{Cauchyremainder}{ii}) for each of Theorems~\ref{thmerrorequiv}--\ref{thmerrorequiv-0}.

The argument is same in the case $\alpha=p+1$, which applies to
Theorem~\ref{thmerrorequivnew}, with the observation that the
stronger bounds required in the hypotheses (\hyperlink{R2}{R2}), (\hyperlink{R3}{R3})
and (\hyperlink{R4}{R4$''$}) with $\beta>0$ are assumed for $r_\phi$ and hence for
$r_R$ and is proved for $r_G$ and hence for $r_H$ in Propositions \ref
{propupperboundrG} and~\ref{proprGiy}.
\end{pf*}

\section*{Acknowledgments}
A part of this work was done while the authors were visiting the
Institute of Mathematical Sciences, Chennai. The authors would like to
thank V. S. Sunder for his hospitality and the stimulating ambience at
the Institute. We thank B. V. Rao and an anonymous referee for reading
an earlier draft and providing recommendations to improve the presentation.

% imsref loaded by akundreckaite, 2012-02-27 10:34:30
%

%

%suskaldyti doi

\printaddresses

\end{document}